\newtheorem{theo}{Theorem}[section]
\newtheorem{defi}[theo]{Definition}
\newtheorem{lem}[theo]{Lemma}
\newtheorem{prop}[theo]{Proposition}
\newtheorem{rem}[theo]{Remark}
\newtheorem{coro}[theo]{Corollary}
\newtheorem{exam}[theo]{Example}
\newenvironment{proof}{{\bf Proof.}}
\newcommand{\agot}{\ensuremath{\mathfrak{a}}}
\newcommand{\hgot}{\ensuremath{\mathfrak{h}}}
\newcommand{\kgot}{\ensuremath{\mathfrak{k}}}
\newcommand{\ngot}{\ensuremath{\mathfrak{n}}}
\newcommand{\qgot}{\ensuremath{\mathfrak{q}}}
\newcommand{\sgot}{\ensuremath{\mathfrak{s}}}
\newcommand{\tgot}{\ensuremath{\mathfrak{t}}}
\newcommand{\zgot}{\ensuremath{\mathfrak{z}}}
\newcommand{\Acal}{\ensuremath{\mathcal{A}}}
\newcommand{\Ccal}{\ensuremath{\mathcal{C}}}
\newcommand{\Fcal}{\ensuremath{\mathcal{F}}}
\newcommand{\Hcal}{\ensuremath{\mathcal{H}}}
\newcommand{\Ocal}{\ensuremath{\mathcal{O}}}
\newcommand{\Pcal}{\ensuremath{\mathcal{P}}}
\newcommand{\Scal}{\ensuremath{\mathcal{S}}}
\newcommand{\Z}{\ensuremath{\mathbb{Z}}}
\newcommand{\C}{\ensuremath{\mathbb{C}}}
\newcommand{\R}{\ensuremath{\mathbb{R}}}
\newcommand{\mm}{\ensuremath{\hbox{\rm m}}}
\newcommand{\tr}{\ensuremath{\hbox{\bf Tr}}}
\newcommand{\ntr}{\ensuremath{\hbox{\bf nTr}}}
\newcommand{\QS}{\ensuremath{\mathrm{Q}^{\mathrm{spin}}}}
\newcommand{\spin}{\ensuremath{\rm Spin}}
\newcommand{\spinc}{\ensuremath{{\rm Spin}^{c}}}
\newcommand{\CW}{{\mathfrak t^*_{\geq 0}}}
\def \wK {\widehat{K}}
\def \what {\widehat}
\def \ad {{\rm ad}}
\begin{document}

\title{Admissible coadjoint orbits for compact Lie groups.}

\author{Paul-Emile PARADAN\footnote{Institut Montpelli\'erain Alexander Grothendieck, CNRS UMR 5149,
Universit\'e de Montpellier, \texttt{paradan@math.univ-montp2.fr}}
\hspace{1mm}  and
Mich\`ele VERGNE\footnote{Institut de Math\'ematiques de Jussieu, CNRS UMR 7586,
Universit\'e Paris 7, \texttt{michele.vergne@imj-prg.fr}}}


\maketitle

{\small
\tableofcontents}



\section{Introduction}

Let $K$ be a real Lie group with Lie algebra $\mathfrak k$.
The notion of admissible coadjoint orbit was introduced by M. Duflo \cite{du} in order to study the unitary dual of $K$.
This notion is  also important in the work of D.Vogan (see \cite{vogan}).
Indeed, quoting him, ``the orbit method provides some light in a very dark room".
In this article, we restrict ourselves to the case where $K$ is a compact connected Lie group.
In this case, everything seems clear, admissible orbits are very easy to describe as well as $\hat K$. Nevertheless,
 we discovered  some remarkable properties of admissible coadjoint orbits
 which (we believe) were unnoticed before.
 Furthermore, as we briefly explain below, these properties are fundamental in the study of the equivariant index of twisted Dirac operators on general $K$-manifolds. It thus became necessary  to return to this subject.
Let us describe our motivations, and some of our results.

Choose a $K$-invariant scalar product on $\kgot^*$, a Cartan subalgebra, and a system of positive roots and
 denote by $\|\rho^K\|$ the norm of $\rho^K=\frac{1}{2}\sum_{\alpha>0}\alpha$.

Coadjoint orbits can be grouped in different subsets  that we call Dixmier  sheets.
If a coadjoint orbit $K\xi$ varies in a Dixmier sheet, the stabilizer subgroup $K_\xi$ of $\xi\in \kgot^*$ remains in a fixed conjugacy class.
 The coadjoint orbits of maximal dimension form the regular sheet.
Furthermore,  there is a map  $\QS_K$ (see Definition \ref{defi:QS})
associating to an admissible coadjoint orbit $\Pcal$  a virtual representation  $\QS_K(\Pcal)$ of $K$.
By this correspondence, regular  admissible  coadjoint orbits parameterize the set $\wK$ of classes of unitary irreducible representations of $K$. For example, the coadjoint orbit of $\rho^K$ is regular admissible
and parameterize the trivial representation of $K$.

 We  associate to a Dixmier sheet $S$ a  subset $I_S$ of $\wK$, a positive number $\|\rho_S\|$ and  a conjugacy class $\sgot_S$  of
 semi-simple subalgebras of $\kgot$:
 the set $I_S$ is the set of irreducible representations of $K$ of the form
 $\QS_K(\Pcal)$ where $\Pcal$ is an admissible orbit belonging  to  $S$,
 $\sgot_S$ is  the conjugacy class of the semi-simple part of the infinitesimal stabilizer $\kgot_\xi$
 for any orbit $K\xi$ in $S$, and  $\|\rho_S\|$ is the norm of the $\rho$-element for the subgroup $K_\xi$ of $\kgot$
(thus $\|\rho_S\|=0$ exactly when $S$ is the regular sheet).

 Let $\Scal_\kgot$  be the set of  conjugacy classes of the subalgebras $\sgot_S$ where $S$ varies over all Dixmier sheets.
The  set $\Scal_\kgot$  and the particular subset $I_S$ of $\hat K$ associated to a Diximier sheet $S$  occur in the following  theorem, that we prove in \cite{pep-vergne:spin2} and announced in \cite{pep-vergne:cras}.

Assume $K$ is  acting on a connected  $\spin$ manifold $M$ (or more generally a $\spinc$ manifold).
Let $L$ be a $K$-equivariant line bundle on $M$.
 Consider  the twisted Dirac operator $D_L$  and its equivariant index $Q(M,L)$.
Then the following holds.

 $\bullet$ If the semi-simple part $\sgot$ of the generic infinitesimal stabilizer $\kgot_M$   of the action of $K$ in $M$
 does not belong to $\Scal_\kgot$,   then $Q(M,L)=\{0\}$ for any equivariant line bundle $L$ on $M$.

$\bullet$ If $\sgot$ is equal to $\sgot_S$,
 then $Q(M,L)$ is a virtual sum of irreducible representations of $K$ belonging to $I_S$.

\bigskip

The proof of this  result uses  an inequality on distance between coadjoint orbits, that we call the magical inequality.
We prove this inequality in this article (Theorem \ref{prop:infernal}).
 In this introduction, let us just state two of its consequences.

\begin{theo}\label{theoinfernalintro}
Let $\Ocal$ be a regular admissible coadjoint orbit, and let $\Pcal$ be a coadjoint orbit in a Dixmier sheet $S$.
Then the distance between $\Ocal$ and $\Pcal$ is greater or equal to  $\|\rho_S\|.$
\end{theo}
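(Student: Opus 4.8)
The plan is to reduce the statement to a concrete optimization problem in the Cartan subalgebra $\tgot^*$, using the fact that distance between coadjoint orbits can be computed combinatorially. First I would recall the standard fact (due to the convexity theorem / Kostant, or simply to the $K$-invariance of the scalar product) that for two orbits $K\lambda$ and $K\mu$ with $\lambda,\mu$ chosen in the closed positive Weyl chamber $\CW$, the distance $d(K\lambda,K\mu)$ equals $\min_{w\in W}\|\lambda-w\mu\|$, and moreover that this minimum is attained at $w=1$, so $d(K\lambda,K\mu)=\|\lambda-\mu\|$ when both representatives lie in $\CW$. Thus, writing $\Ocal=K\lambda$ with $\lambda\in\CW$ the (regular, admissible) infinitesimal character — so $\lambda$ is in the open chamber — and choosing a representative $\xi\in\CW$ of a point of $\Pcal$ whose stabilizer $K_\xi$ has semisimple part in the conjugacy class $\sgot_S$, the claim becomes the inequality $\|\lambda-\xi\|\geq\|\rho_S\|$.

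Next I would analyze the geometry of the face of $\CW$ containing $\xi$. Since $K\xi$ lies in the Dixmier sheet $S$, the centralizer $\kgot_\xi$ is a fixed (up to conjugacy) reductive subalgebra, and $\xi$ lies in the relative interior of a face $\tgot^*_\sigma$ of $\CW$ cut out by the vanishing of the simple roots $\alpha$ with $\langle\alpha,\xi\rangle=0$; call this set of simple roots $\Sigma$, and let $\kgot_\xi=\zgot\oplus\sgot$ with $\sgot$ the semisimple part, whose positive roots are exactly the positive roots in the span of $\Sigma$. The element $\rho_S$ is, by definition, (the norm of) the half-sum $\rho_\sigma=\frac12\sum_{\alpha>0,\ \alpha\in\mathrm{span}(\Sigma)}\alpha$ of those roots, and the key structural point is that $\rho_\sigma$ is orthogonal to the face $\tgot^*_\sigma$: indeed $\rho_\sigma$ is $W_\sigma$-invariant only up to sign, but more to the point $\langle\rho_\sigma,\beta\rangle$ for $\beta$ in the face is controlled by pairings $\langle\alpha,\beta\rangle$ with $\alpha\in\Sigma$, and one checks $\rho_\sigma\perp\tgot^*_\sigma$ directly from the fact that $w\rho_\sigma=\rho_\sigma$ for all $w\in W_\sigma$ while $W_\sigma$ acts transitively-enough on directions tangent to the face. (This orthogonality is the one small lemma I would isolate and prove carefully.)

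Then I would estimate $\|\lambda-\xi\|^2$. Decompose $\lambda-\xi$ along the face $\tgot^*_\sigma$ and its orthogonal complement; the component in $\tgot^*_\sigma$ is irrelevant for a lower bound, so $\|\lambda-\xi\|\geq\mathrm{dist}(\lambda,\tgot^*_\sigma)=\|P(\lambda)\|$ where $P$ is the orthogonal projection onto $(\tgot^*_\sigma)^\perp$. Since $\lambda$ is regular and dominant, $\langle\alpha,\lambda\rangle>0$ for every $\alpha\in\Sigma$; I would then show that the projection of $\lambda$ onto the span of $\Sigma$ (which contains $(\tgot^*_\sigma)^\perp$-relevant directions) dominates $\rho_\sigma$ in the appropriate sense, using that $\rho_\sigma$ is itself the dominant regular element for $\sgot$ with all its $\Sigma$-pairings equal to $\frac12\langle\alpha,\alpha\rangle>0$, i.e. minimal among strictly dominant elements. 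Concretely, in the root system of $\sgot$, among all elements $\nu$ with $\langle\alpha,\nu\rangle\geq\frac12\langle\alpha,\alpha\rangle$ for all simple $\alpha\in\Sigma$, the element $\rho_\sigma$ has minimal norm; combined with the admissibility of $\lambda$ (which forces $\langle\alpha,\lambda\rangle$ to be a positive integer for each $\alpha\in\Sigma$, hence $\geq1$, hence $\langle\alpha^\vee,\lambda\rangle\geq1$) one gets $\|P(\lambda)\|\geq\|\rho_\sigma\|=\|\rho_S\|$, finishing the proof.

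The main obstacle I anticipate is the last step: carefully justifying the minimality of $\|\rho_\sigma\|$ among the relevant lattice-constrained dominant vectors and making sure the orthogonal-projection bound really captures $\|\rho_S\|$ rather than a smaller quantity. In other words, one must check that dropping the tangential component of $\lambda-\xi$ and the central component of $\kgot_\xi$ does not lose the constant $\|\rho_S\|$ — this is exactly where the magical inequality (Theorem~\ref{prop:infernal}), which compares distances between a regular orbit and an arbitrary orbit, should be invoked to handle the general (non-infinitesimal-character) case and to control the interaction between the center $\zgot$ and the semisimple part $\sgot$. I expect the clean way to organize the argument is: (i) the orthogonality lemma $\rho_\sigma\perp\tgot^*_\sigma$; (ii) the projection inequality $d(\Ocal,\Pcal)\geq\mathrm{dist}(\lambda,\tgot^*_\sigma)$; (iii) the $\sgot$-local minimality of $\rho_\sigma$ together with admissibility; and then invoke Theorem~\ref{prop:infernal} to glue these together.
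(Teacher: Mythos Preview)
Your approach is essentially the paper's: the orthogonal decomposition $\tgot^*=\R\sigma\oplus\agot^*$ (with $\agot^*=[\kgot_\sigma,\kgot_\sigma]^*$), the observation that the projection of $\lambda\in\rho^K+\CW$ onto $\agot^*$ lies in the shifted Weyl chamber $\rho^{K_\sigma}+\CW_{K_\sigma}$ of the Levi (because $\langle H_\alpha,\lambda\rangle\geq 1$ for the simple $\alpha\in\Sigma$), and the minimality of $\|\rho^{K_\sigma}\|$ there (Proposition~\ref{prop:beforeinfernal}(1)) are exactly the ingredients of the proof of Theorem~\ref{prop:infernal}, from which the distance statement is deduced in the paper as Corollary~\ref{corodistance}.

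Two minor points. First, the orthogonality $\rho^{K_\sigma}\perp\R\sigma$ needs no Weyl-group argument: $\rho^{K_\sigma}$ is a combination of roots $\alpha$ with $(\alpha,\sigma)=0$, hence lies in $[\kgot_\sigma,\kgot_\sigma]^*=(\R\sigma)^\perp$. Second, your closing plan to ``invoke Theorem~\ref{prop:infernal} to glue these together'' is circular --- that theorem \emph{is} the inequality you are reproving, and the present statement is its immediate corollary; your steps (i)--(iii) already constitute a complete proof and need no such fallback. (Also, it is $\langle\alpha^\vee,\lambda\rangle$, not $\langle\alpha,\lambda\rangle$, that admissibility forces to be a positive integer; your conclusion $\langle\alpha^\vee,\lambda\rangle\geq 1$ is the correct one and is all that is used.)
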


Assume that $\Pcal$ is an admissible coadjoint orbit.
We prove in Theorem \ref{theo:notregular} that $\QS_K(\Pcal)$ is either $0$, or irreducible. If $\QS_K(\Pcal)$ is irreducible, then
$\QS_K(\Pcal)=\QS_K(\Ocal)$ for a unique admissible regular  orbit $\Ocal$.  We then say that $\Pcal$ is an ancestor of $\Ocal$.
We prove the following theorem (Theorem \ref{ancestorsanddistances}).
\begin{theo}\label{theoancestorsintro}
Let $\Ocal$ be a regular admissible orbit.
The ancestors of $\Ocal$ in a Dixmier sheet $S$ are all the elements in $S$ at distance $\|\rho_S\|$ of $\Ocal$.
\end{theo}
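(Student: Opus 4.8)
The plan is to realise each coadjoint orbit by its unique point in a fixed closed positive Weyl chamber $\tgot^*_{\geq 0}$, and to write $S=S_\sigma$ for the Dixmier sheet attached to the face on which exactly the simple roots of a subset $\sigma$ vanish; thus the orbits in $S$ are the $K\mu$ with $\mu$ in the relative interior of that face, for such $\mu$ the stabiliser algebra $\kgot_\mu$ has root system $R_\sigma$ (the roots in $\mathrm{span}_\R\sigma$), and $\|\rho_S\|=\|\rho_\sigma\|$ with $\rho_\sigma=\tfrac12\sum_{\alpha\in R_\sigma^+}\alpha$. Write $\Ocal=K\Lambda$; regular admissibility means $\Lambda$ is dominant regular and $\lambda:=\Lambda-\rho^K$ is a dominant weight with $\QS_K(\Ocal)=V_\lambda$. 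I would first record three ingredients: (i) $d(K\xi,K\eta)=\|\xi-\eta\|$ for $\xi,\eta\in\tgot^*_{\geq0}$ (the minimum of $\|\xi-w\eta\|$ over $W$ is at $w=e$, since $\langle\xi,\eta-w\eta\rangle\geq 0$ for dominant $\xi,\eta$); (ii) in the orthogonal splitting $\tgot^*=E_\sigma\oplus E_\sigma^\perp$ with $E_\sigma:=\mathrm{span}_\R R_\sigma$, one has $(\rho^K)_{E_\sigma}=\rho_\sigma$ — because $\rho^K-\rho_\sigma=\tfrac12\sum_{\alpha\in R^+\setminus R_\sigma^+}\alpha$ is $W_\sigma$-invariant, hence lies in $E_\sigma^\perp$ — while $\mu\in E_\sigma^\perp$ for every $K\mu\in S$, as $\mu$ is $W_\sigma$-fixed; (iii) the explicit form of $\QS_K$ on admissible orbits, from Theorem \ref{theo:notregular} and its proof: for $K\mu\in S$, $K\mu$ is admissible iff $\mu+\rho_\sigma-\rho^K$ is a weight, and then $\QS_K(K\mu)=0$ when $\mu+\rho_\sigma$ is $W$-singular, while $\QS_K(K\mu)=V_{w(\mu+\rho_\sigma)-\rho^K}$ when $w\in W$ makes $w(\mu+\rho_\sigma)$ dominant. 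Reformulated: \emph{$K\mu\in S$ is an ancestor of $\Ocal$ if and only if $\mu+\rho_\sigma$ is $W$-conjugate to $\Lambda$} (the $W$-orbit of $\Lambda$ consists of $\rho^K$-shifted weights, so this already forces admissibility).

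The heart of the argument would be the elementary inequality $\|\Lambda_{E_\sigma}\|\geq\|\rho_\sigma\|$, with equality only if $\Lambda_{E_\sigma}=\rho_\sigma$. To prove it, set $q:=\Lambda_{E_\sigma}-\rho_\sigma=(\Lambda-\rho^K)_{E_\sigma}$; for $\alpha\in R_\sigma^+$ (so $\alpha^\vee\in E_\sigma$) one has $\langle q,\alpha^\vee\rangle=\langle\lambda,\alpha^\vee\rangle\geq 0$, so $q$ is $R_\sigma$-dominant, and since $\rho_\sigma$ is $R_\sigma$-dominant as well, $\langle\rho_\sigma,q\rangle\geq 0$; hence
\[
\|\Lambda_{E_\sigma}\|^2=\|\rho_\sigma\|^2+2\langle\rho_\sigma,q\rangle+\|q\|^2\ \geq\ \|\rho_\sigma\|^2 ,
\]
with equality only if $q=0$. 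Writing $\Lambda-\mu=\Lambda_{E_\sigma}+(\Lambda_{E_\sigma^\perp}-\mu)$ as an orthogonal sum (using $\mu\in E_\sigma^\perp$), this gives for every $K\mu\in S$
\[
d(\Ocal,K\mu)^2=\|\Lambda-\mu\|^2=\|\Lambda_{E_\sigma}\|^2+\|\Lambda_{E_\sigma^\perp}-\mu\|^2\ \geq\ \|\rho_\sigma\|^2=\|\rho_S\|^2 ,
\]
which is Theorem \ref{theoinfernalintro} restricted to this case (and may instead simply be quoted from there).

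Then the two inclusions would follow quickly. If $K\mu\in S$ is an ancestor of $\Ocal$, then $\mu+\rho_\sigma=w\Lambda$ for some $w\in W$ by (iii); since $w\Lambda\in\Ocal$ and $\mu\in K\mu$ with $\|w\Lambda-\mu\|=\|\rho_\sigma\|$, one gets $d(\Ocal,K\mu)\leq\|\rho_\sigma\|$, hence $d(\Ocal,K\mu)=\|\rho_S\|$ by the lower bound just obtained. Conversely, if $K\mu\in S$ and $d(\Ocal,K\mu)=\|\rho_S\|$, the last displayed identity forces $\|\Lambda_{E_\sigma}\|=\|\rho_\sigma\|$ and $\Lambda_{E_\sigma^\perp}=\mu$; the equality case gives $\Lambda_{E_\sigma}=\rho_\sigma$, so $\mu+\rho_\sigma=\Lambda_{E_\sigma^\perp}+\Lambda_{E_\sigma}=\Lambda$, and (iii) shows $K\mu$ is admissible with $\QS_K(K\mu)=V_\lambda=\QS_K(\Ocal)$, i.e. an ancestor of $\Ocal$. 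As a byproduct this also shows that $\Ocal$ has an ancestor in $S$ exactly when $\Lambda_{E_\sigma}=\rho_\sigma$, and then $K\Lambda_{E_\sigma^\perp}$ is the only one.

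The one place carrying real content is ingredient (iii): unwinding Definition \ref{defi:QS} identifies $\QS_K(K\mu)$ with a $\spinc$ index on the flag manifold $K/K_\mu$, and it is the metaplectic correction that replaces the naive parameter $\mu$ by $\mu+\rho_\sigma$ in the Borel--Weil--Bott formula — which is precisely what is established around Theorem \ref{theo:notregular}, so here only a reformulation is required. Beyond that, the proof rests only on the orthogonal decomposition $\tgot^*=E_\sigma\oplus E_\sigma^\perp$ and the one-line estimate $\|\rho_\sigma+q\|\geq\|\rho_\sigma\|$ for $R_\sigma$-dominant $q$, so I do not expect a genuine obstacle once (iii) is put in the stated form.
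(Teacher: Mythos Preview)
Your proof is correct and follows essentially the same route as the paper: the paper derives Theorem~\ref{ancestorsanddistances} directly from the magical inequality (Corollary~\ref{cor:magic}), whose proof in Theorem~\ref{prop:infernal} is exactly your orthogonal decomposition $\tgot^*=E_\sigma\oplus E_\sigma^\perp$ together with the estimate $\|\rho_\sigma+q\|\geq\|\rho_\sigma\|$ for $R_\sigma$-dominant $q$. The only organisational difference is that you first reduce to dominant $\mu$ via ingredient (i), whereas the paper proves the inequality for arbitrary $\mu\in\tgot^*$ and lets the equality case force dominance.

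One small caveat: the identification ``$S=S_\sigma$'' is not quite right, since the map $\sigma\mapsto(\kgot_\sigma)$ from faces to Dixmier sheets need not be injective (in the $SU(3)$ example the subregular sheet meets the closed Weyl chamber in two distinct faces). This does not damage your argument---it runs face by face and $\|\rho_\sigma\|=\|\rho_S\|$ is the same for every face over a given sheet---but your concluding ``byproduct'' about uniqueness should be read as ``at most one ancestor per face'', not per sheet. Also, the formula in (iii) as stated omits the sign $\epsilon(w)$; this is harmless here because Theorem~\ref{prop-general-admissible} guarantees that for admissible $\mu\in\sigma$ with $\mu+\rho_\sigma$ regular one already has $\mu+\rho_\sigma$ dominant, so $w=e$.
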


In general $\Ocal$ has a unique ancestor, $\Ocal$ itself.
But for example the orbit of $\rho^K$ has $2^r$ ancestors, where $r$ is the rank of $[K,K]$.

\section*{Acknowledgments}
We wish to thank
  the Research in Pairs program at
Mathematisches Forschungsinstitut Oberwolfach (February 2014), where our work on equivariant indices of Dirac operators was started.
The second author wish to thank Michel Duflo for his many comments.

\begin{center}
\bf Notations
\end{center}

Throughout the paper :
\begin{itemize}
\item $K$ denotes a compact connected Lie group with Lie algebra $\kgot$.
\item $T$ is a maximal torus in $K$ with Lie algebra $\tgot$.
\item $\Lambda\subset \tgot^*$ is the weight lattice  of $T$ : every $\mu\in \Lambda$ defines a $1$-dimensional
$T$-representation, denoted $\C_\mu$, where $t=\exp(X)$ acts by $t^\mu:= e^{i\langle\mu, X\rangle}$.

\item If $\hgot$ is a subalgebra of $\kgot$, we denote
by $(\hgot)$ its conjugacy class.

\item We fix a $K$-invariant Euclidean inner product $(\cdot,\cdot)$ on $\kgot$. This allows us to identify $\kgot$ and $\kgot^*$ when needed.

We denote by $\langle \cdot,\cdot\rangle$ the natural duality between $\kgot$ and $\kgot^*$.

\item We denote by $R(K)$ the representation ring of $K$ : an element $E\in R(K)$ can be represented
as finite sum $E=\sum_{\lambda\in\what{K}}\mm_\lambda V^K_\lambda$, with $\mm_\lambda\in\Z$. Here we have denoted by $V^K_\lambda$
the irreducible representation of $K$ indexed by $\lambda$.
\item We denote by $\hat R(K)$ the space of $\Z$-valued functions on $\hat K$. An element $E\in
    \hat R(K)$ can be represented
as an infinite sum $E=\sum_{\lambda\in\what{K}}\mm(\lambda) V^K_\lambda$, with $\mm(\lambda)\in\Z$.
An element of $\hat R(K)$ will be called a virtual representation of $K$.

\item If $H$ is a closed subgroup of $K$, the induction map $\mathrm{Ind}_H^K: \hat{R}(H)\to \hat{R}(K)$ is the dual of the restriction morphism $R(K)\to R(H)$. It is given by the Frobenius reciprocity formula:
     the multiplicity   $\mm(\lambda)$ of the irreducible representation $V_\lambda^K$ in
     $\mathrm{Ind}_H^K V_\mu^H$  is the multiplicity of $V_\mu^H$ in the restriction of the representation $V_\lambda^K$ to
     $H$.

\item If $E$ is a complex representation space of $H$, we denote simply by
$\bigwedge^{\bullet} E$ the  virtual representation of $H$ which is the difference of the representations of
$H$ in $\bigwedge^{\rm even} E$ and $\bigwedge^{\rm odd} E$.

\item The stabilizer of $\xi\in \kgot^*$  is denoted by $K_\xi$. It is a connected subgroup of $K$.
The Lie algebra of $K_\xi$ is denoted by $\kgot_\xi$. The element $\xi\in \kgot^*$ is called regular if $K_\xi$ is a Cartan subgroup of $K$.

\item In general, the notation $\Pcal$ is for any coadjoint orbit, while the notation $\Ocal$ is reserved for regular coadjoint orbits.

\end{itemize}

\section{Coadjoint orbits and Dixmier sheets}

Let $K$ be a compact connected Lie group with Lie algebra $\kgot$.

We first define the $\rho$-orbit.
Let $T$ be a Cartan subgroup of $K$.
Then $\tgot^*$ is imbedded in $\kgot^*$ as the subspace of $T$-invariant elements.
Roots will  be considered as elements of $\tgot^*$ (the corresponding character of $T$ being $e^{i\alpha}$).
Choose a system of positive roots $\Delta^+\subset {\mathfrak t}^*$, and let
$\rho^K=\frac{1}{2}\sum_{\alpha>0}\alpha$.
The definition of $\rho^K$ requires
  the choice of a Cartan subgroup $T$ and of a  positive root system. However a different choice leads to a conjugate element. Thus we can make the following definition.

\begin{defi}\label{def:s-O}
We denote by
$o(\kgot)$ the coadjoint orbit of $\rho^K\in \kgot^*$.
We call $o(\kgot)$ the $\rho$-orbit.
We denote by $\|\rho^K\|$ the norm of any point on $o(\kgot)$.
\end{defi}

\medskip

Let $\mathcal{H}_{\mathfrak k}$ be the set of conjugacy classes of the reductive algebras ${\kgot}_\xi,\xi\in\mathfrak k^*$.
The set $\mathcal{H}_{\mathfrak k}$  contains the conjugacy class $(\tgot)$ formed by the Cartan sub-algebras.
It contains also $(\kgot)$ (stabilizer of $0$).
\begin{rem}
If $\hgot=\kgot_\xi$, then $\hgot_\C$ is the Levi subalgebra of the parabolic subalgebra determined by $\xi$.
Parabolics are classified by subsets of simple roots. Thus there are $2^r$ conjugacy classes of parabolics if $r$ is the rank of $[\kgot,\kgot]$.
 However,  different conjugacy classes of parabolics  might give rise to the same conjugacy class of Levi subalgebras (as seen immediately for type $A_n$).
\end{rem}

We group the coadjoint orbits  according to the conjugacy class $(\hgot)\in \mathcal{H}_{\mathfrak k}$ of the stabilizer.
\begin{defi}
The Dixmier sheet $\kgot^*_{(\hgot)}$  is the set of orbits $K\xi$ with $\kgot_\xi$ conjugated to $\hgot$.
 \end{defi}
We also say that an element of $\kgot^*_{(\hgot)}$ is a coadjoint orbit of type $(\hgot)$.

If $(\hgot)=(\tgot)$, the corresponding Dixmier sheet is the set of regular coadjoint orbits.

If $(\hgot)=(\kgot)$, the corresponding Dixmier sheet is the set of $0$-dimensional orbits,
that is the orbits $\{\xi\}$ of the elements $\xi$
  of $\kgot^*$  vanishing on $[\kgot,\kgot] $.

\bigskip

We denote by $\mathcal S_{\mathfrak k}$ the set of conjugacy classes of the semi-simple parts $[\hgot,\hgot]$ of
the elements $(\hgot)\in \mathcal H_{\mathfrak k}$.
\begin{lem}\label{lem:conjugacy}
The map $(\hgot)\to( [\hgot,\hgot])$ induces a bijection
between $\mathcal H_{\mathfrak k}$ and $\mathcal S_{\mathfrak k}$.
\end{lem}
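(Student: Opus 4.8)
The plan is to verify that the assignment $(\hgot)\mapsto([\hgot,\hgot])$ is well defined and surjective straight from the definitions of $\mathcal{H}_{\mathfrak k}$ and $\mathcal{S}_{\mathfrak k}$, and then to obtain injectivity by reconstructing a stabilizer algebra $\hgot=\kgot_\xi$, up to conjugacy, from its semisimple part. After conjugating $\xi$ we may assume $\xi\in\tgot^*$, so that $\hgot=\kgot_\xi$ contains the Cartan subalgebra $\tgot$ of $\kgot$; being reductive it splits as $\hgot=\sgot\oplus\zgot(\hgot)$, where $\sgot:=[\hgot,\hgot]$, $\zgot(\hgot)$ is the center of $\hgot$, and $\zgot(\hgot)\subset\tgot$. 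The first point I would record is that the centralizer $\zgot_{\kgot}(\hgot)$ of $\hgot$ in $\kgot$ equals $\zgot(\hgot)$: an element centralizing $\hgot$ centralizes $\tgot$, hence lies in $\tgot$, and an element of $\tgot$ centralizes $\hgot$ precisely when every root of $\hgot$ vanishes on it, that is, precisely when it lies in $\zgot(\hgot)$. This is the one step that really uses that $\hgot$ is the stabilizer of a point (equivalently, that it contains a full Cartan subalgebra of $\kgot$), and it is where I expect whatever subtlety there is to be located.

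The key claim is then that $\zgot(\hgot)$ is a Cartan subalgebra of the compact reductive algebra $\zgot_{\kgot}(\sgot)$, the centralizer of $\sgot$ in $\kgot$. Indeed $\zgot(\hgot)\subset\zgot_{\kgot}(\sgot)$ because $\zgot(\hgot)$ centralizes $\hgot$ and $\sgot\subset\hgot$; and if $Y\in\zgot_{\kgot}(\sgot)$ commutes with $\zgot(\hgot)$, then $Y$ commutes with $\sgot+\zgot(\hgot)=\hgot$, so by the previous paragraph $Y\in\zgot(\hgot)$. Hence $\zgot(\hgot)$ is its own centralizer inside the compact algebra $\zgot_{\kgot}(\sgot)$, so it is a maximal abelian subalgebra of $\zgot_{\kgot}(\sgot)$, i.e. a Cartan subalgebra.

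Injectivity follows at once. If $(\hgot_1)$ and $(\hgot_2)$ have the same image, then after conjugating we may assume $[\hgot_1,\hgot_1]=[\hgot_2,\hgot_2]=\sgot$. By the claim, $\zgot(\hgot_1)$ and $\zgot(\hgot_2)$ are two Cartan subalgebras of $\zgot_{\kgot}(\sgot)$, hence are conjugate by some element $k$ of the connected subgroup of $K$ with Lie algebra $\zgot_{\kgot}(\sgot)$. Since $k$ centralizes $\sgot$, the automorphism $\mathrm{Ad}(k)$ fixes $\sgot$ and carries $\zgot(\hgot_1)$ onto $\zgot(\hgot_2)$, hence carries $\hgot_1=\sgot\oplus\zgot(\hgot_1)$ onto $\hgot_2=\sgot\oplus\zgot(\hgot_2)$. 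Therefore $(\hgot_1)=(\hgot_2)$. Apart from the centralizer computation flagged above, every ingredient used is standard: reductivity of centralizers in a compact Lie algebra, conjugacy of Cartan subalgebras of a compact connected group, and the Levi decomposition $\hgot=[\hgot,\hgot]\oplus\zgot(\hgot)$.
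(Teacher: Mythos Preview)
Your proof is correct and follows the same underlying strategy as the paper's: show that once the semisimple part $\sgot$ is fixed, the remaining ``abelian piece'' of $\hgot$ is determined up to conjugation by an element that preserves $\sgot$. The paper phrases this via the normalizer: it takes Cartan subalgebras $\tgot\subset\hgot$ and $\tgot'\subset\hgot'$, observes (implicitly, using that stabilizers have full rank) that these are Cartan in the normalizer $\ngot$ of $\sgot$, and conjugates them inside $N$. You phrase it via the centralizer: you show that $\zgot(\hgot)$ is a Cartan subalgebra of $\zgot_\kgot(\sgot)$ and conjugate inside the corresponding connected subgroup. Since $\ngot=\sgot\oplus\zgot_\kgot(\sgot)$, the two formulations are equivalent; your version has the merit of isolating and proving the key claim (that $\zgot(\hgot)$ is Cartan in $\zgot_\kgot(\sgot)$, which is exactly where the full-rank hypothesis on $\hgot$ enters), whereas the paper's one-line argument leaves that verification to the reader.
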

\begin{proof}
Assume that $[\hgot,\hgot]=[\hgot',\hgot']=\sgot$. Let $N$ be the normalizer of $\sgot$,
and  $\ngot$ its Lie algebra.
Thus  $\hgot$ and $\hgot'$ are  contained in $\ngot$. Let $\tgot,\tgot'$ be  Cartan subalgebras of $\hgot, \hgot'$.
Then $\tgot$ and $\tgot'$ are conjugated  by an element of $N$.
As $\hgot=\sgot+\tgot$, we see that $\hgot$ is conjugated to $\hgot'$. $\Box$
\end{proof}
\medskip


The connected Lie subgroup  with Lie algebra $\hgot$ is denoted $H$. Thus if $\hgot=\kgot_\xi$, then $H=K_\xi$.
We write $\hgot= \zgot \oplus [\hgot,\hgot]$ where $\zgot$ is the center and  $[\hgot,\hgot]$ is the semi-simple part of $\hgot$.
Thus $\hgot^*= \zgot^* \oplus [\hgot,\hgot]^*$ and
$\zgot^*$ is the set of elements in $ \hgot^*$ vanishing on the semi-simple part of $\hgot$.
We write $\kgot=\hgot\oplus [\zgot,\kgot]$, so we embed $\hgot^*$ in $\kgot^*$ as a $H$-invariant subspace,  that is
we consider an element $\xi\in \hgot^*$ also as an element of $\kgot^*$ vanishing on  $[\zgot,\kgot]$.

We  consider the $\rho$-orbit $o(\hgot)$ in $\hgot^*$. Remark that $o(\hgot)$  is contained in
$[\hgot,\hgot]^*$.
We denote by $\|\rho^H\|$ the norm of any point in $o(\hgot)$.

If $S$ is the Dixmier sheet associated to $(\hgot)$, we denote by
 $\|\rho^S\|$ the norm of  $\rho^H$.
%
%
%

\section{Admissible coadjoint orbits}

We will be interested in admissible coadjoint orbits.
This notion  is defined in  \cite{du} for any real Lie group.
 Let us now describe the set of admissible coadjoint orbits  in concrete terms when  $K$ is a compact connected Lie group.

Consider $\xi\in \kgot^*$.
We have  $\langle \xi, [\kgot_\xi,\kgot_\xi]\rangle=0$.
 If $i\theta  : \kgot_\xi\to i\R$
is the differential of a character of $K_\xi$, we denote by $\C_\theta$ the corresponding $1$-dimensional representation
of $K_\xi$, and by $[\C_\theta]=K\times_{K_\xi}\C_\theta$ the corresponding  line bundle over the coadjoint orbit $K\xi\subset\kgot^*$.

\begin{defi}\label{lem:integral}
An element $\xi\in\kgot^*$ is integral if  $i\xi$ is the differential of a $1$-dimensional representation of $K_\xi$. 
\end{defi}

This notion of integrability is invariant under the coadjoint action, so a coadjoint orbit is called integral when any of its element is integral.

The following notion of admissibility is necessary for defining  the spin quantization $\QS_K(\Pcal)$ of an orbit $\Pcal$.

 Let $K\xi$ be a coadjoint orbit. The quotient space $\kgot/\kgot_\xi$ is equipped with the symplectic
 form $\Omega_\xi(\bar{X},\bar{Y}):=\langle \xi,[X,Y]\rangle$, and with a unique
 $K_\xi$-invariant complex structure $J_\xi$ such that $\Omega_\xi(-,J_\xi -)$ is a scalar product.
We denote by $\qgot^{\xi}$ the space
$\kgot/\kgot_\xi$ {\bf considered as a complex vector space}  via the complex structure $J_\xi$.
Any element $X\in \kgot_\xi$ defines a complex linear map $\mathrm{ad}(X): \qgot^\xi\to \qgot^\xi$.
\begin{defi}\label{def-rho-a}
For any $\xi\in \kgot^*$, we denote by $\rho(\xi)$ the element of $\kgot_\xi^*$ such that
$$
\langle\rho(\xi),X\rangle= \frac{1}{2i}\tr_{\qgot^\xi}\mathrm{ad}(X), \quad X\in\kgot_\xi.
$$

\end{defi}

Note that $\rho(\xi)$ vanishes on $[\kgot_\xi,\kgot_\xi]$. 
As explained in the preceding section, we consider also $\rho(\xi)$ as an element of $\kgot^*$.

Notice that $2\rho(\xi)$ is integral since $2i\rho(\xi)$ is the differential of the character
$k\in K_\xi\mapsto \det_{\qgot^\xi}(k)$. 

Let $\tgot$ be a Cartan subalgebra and $\xi\in \tgot^*$. Then if  $\Delta$ is the root system with respect to $\tgot$, we have
$\rho(\xi)=\frac{1}{2}\sum_{\alpha\in \Delta, (\alpha,\xi)>0}\alpha.$

\bigskip

\begin{defi}\label{lem:admissible}
We say that
the coadjoint orbit  $K\xi$ is admissible if  $i(\xi-\rho(\xi))$ is the differential of a $1$-dimensional representation of $K_\xi$.
\end{defi}

 Remark that if the coadjoint orbit  $K\xi$ is admissible,
then   $2\xi$ is integral.

It is easy to see that, for a compact connected Lie group $K$, this definition of admissibility is equivalent to Duflo definition via the metaplectic correction.

In particular the orbit $o(\kgot)$ is admissible.
Indeed if $\xi=\rho^K$, then $\xi-\rho(\xi)=0$.

\begin{rem}
When the group $K$ is simply connected, then a regular admissible orbit  $K\xi$ is integral.
This is not the case when $K$ is not simply connected. For example the orbit $o(\kgot)$
(which is regular admissible) is
not integral when $K=SO(3)$.
However, even if the group $K$ is simply connected,
non regular admissible coadjoint orbits are not necessary integral.
See the example
\ref{exa:ExampleU3} below.
\end{rem}

\begin{defi}
We denote by $\Acal((\hgot))$ the set of admissible orbits of type
$(\hgot)$.
\end{defi}

We now define the shift of an orbit. If $\mu\in \kgot^*$, recall that $o(\kgot_\mu)\subset \kgot_\mu^*$ is the coadjoint orbit
of the $\rho$-element for the group $K_\mu$.
\begin{defi}\label{defi-shift}
 To any coadjoint orbit $\Pcal\subset\kgot^*$, we associate the coadjoint orbit $s(\Pcal)\subset \kgot^*$
which is defined as follows : if $\Pcal=K\mu$, take $s(\Pcal)=K\xi$ with $\xi\in \mu +o(\kgot_\mu)$.
We call $s(\Pcal)$ the shift of the orbit $\Pcal$.
\end{defi}
If $\Ocal$ is regular,  $s(\Ocal)=\Ocal$, and if $\Pcal=\{0\}$, then $s(\Pcal)=o(\kgot)$.

Beware that we have two elements of $\hgot^*=[\hgot,\hgot]^*\oplus \zgot^*$ associated to an element $\mu\in \kgot^*$ such that $\kgot_\mu=\hgot$: the element
$\rho(\mu)\in \zgot^*$ (defined canonically) and the element $\rho^{K_\mu}\in [\hgot,\hgot]^*$ (defined up to $H$-conjugacy).
Remark that $\rho(\mu)+\rho^{K_\mu}$ is conjugated to $\rho^K$.
More concretely,
if we choose a Cartan subgroup $T$ of $H$ with Lie algebra $\tgot$,
 and a positive root system $\Delta^+$ for roots of $\kgot$ with respect to $\tgot$,
 then, when $\mu$ is dominant,
$$\rho(\mu)=\frac{1}{2}\sum_{\alpha\in \Delta^+,(\alpha,\mu)>0}\alpha$$
and
$$\rho^{K_\mu}=\frac{1}{2}\sum_{\alpha\in \Delta^+,(\alpha,\mu)=0}\alpha.$$

The orbit $K\mu$ is admissible if $\mu-\rho(\mu)$ is in the weight lattice of $T$.
The shift of the orbit $K\mu$ is $K(\mu+\rho^{K_\mu})$.

\begin{exam}\label{exa:ExampleU3} Consider the group $K=SU(3)$.
 Then there are $3$ different sheets, determined (for this case) by the dimensions of orbits.
 The regular sheet consists of coadjoint orbits of  dimension $6$,
 The subregular sheet consists of coadjoint orbits of  dimension $4$,
finally we have the orbit $\{0\}$ of dimension $0$.

 We now parameterize the subregular sheet.
 Let $\hgot$ be the Lie algebra of $H=S(U(1)\times U(2))$.
Then the stabilizer $\kgot_f$ of a coadjoint orbit $Kf$ of dimension $4$ is conjugated to $\hgot$.
Let us give representatives of the orbits in $\kgot^*_{(\hgot)}$.

We consider the Cartan subalgebra of diagonal matrices and choose a Weyl chamber.
Let $\omega_1,\omega_2$ be the two fundamental weights.
Thus $\rho^K=\omega_1+\omega_2$.
Let $\sigma_1,\sigma_2$ be the half lines $\R_{>0}\omega_1$, $\R_{>0}\omega_2$.

Then  $\kgot^*_{(\hgot)}$ is the set of orbits
$K(t\omega_1)$, with $t\neq 0$.
Here we have privileged an element $f$ on the orbit $\Pcal\in \kgot^*_{(\hgot)}$ such that
$\kgot_f=\hgot.$
We could also privilege representatives of orbits  $\Pcal\in \kgot^*_{(\hgot)}$ belonging to the chosen closed Weyl chamber.
As $-\omega_1$  conjugated to $\omega_2$, we obtain the description of
$\kgot^*_{(\hgot)}$ as the set of orbits $\{K(t_1\omega_1), K(t_2\omega_2)\}$ with $t_1>0$ and $t_2>0$
(orbits of points in the boundary of the Weyl chamber, except $\{0\}$).

Let us now describe the set $\Acal((\hgot))$ of admissible coadjoint orbits of type $(\hgot)$.

We obtain, using  the first description,
that the set $\Acal((\hgot))$ is equal to the collection of orbits $K\cdot (\frac{1+2n}{2}\omega_1), n\in\Z$ 
(see Figure \ref{admissiblesSU2}).
Remark that  orbits in $\Acal((\hgot))$ are admissible, but not  integral.

\begin{figure}[!h]
\begin{center}
 \includegraphics[width=2.5 in]{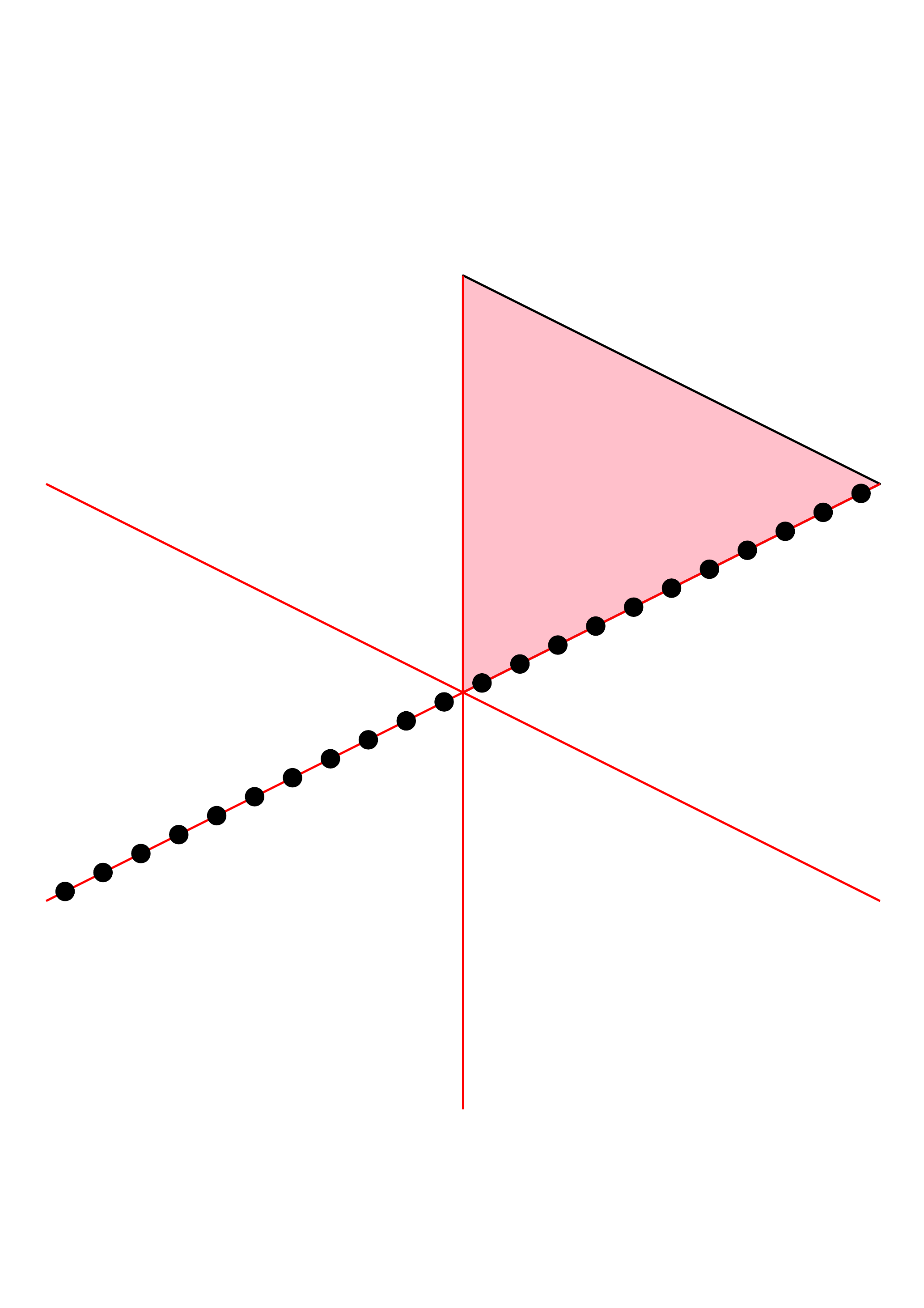}
\caption{ A set of representatives of admissible orbits of type $(\hgot)$}
 \label{admissiblesSU2}
\end{center}
\end{figure}

Using the second description, with representatives in the chosen closed Weyl chamber,
we  see that the set $\Acal((\hgot))$ is equal to the collection of orbits $K\cdot (\frac{1+2n}{2}\omega_i), n\in\Z_{\geq 0}, i=1,2$.

We have also depicted in this description the shifted orbits.
Remark that the shift of the admissible elements in $\Acal_{(\hgot)}$ are admissible, except for the two elements
 $\frac{1}{2}\omega_1$ and $\frac{1}{2}\omega_2$
 with shifts  $\omega_2$ and $\omega_1$ respectively, elements which are not admissible.

Finally remark that the orbit $K\rho^K$ is obtained as the shift of  the admissible orbits
$\{0\}$, $K(\frac{3}{2}\omega_1)$, $K(\frac{3}{2}\omega_2)$, and $K\rho_K$.

\begin{figure}[!h]
\begin{center}
  \includegraphics[width=2.5 in]{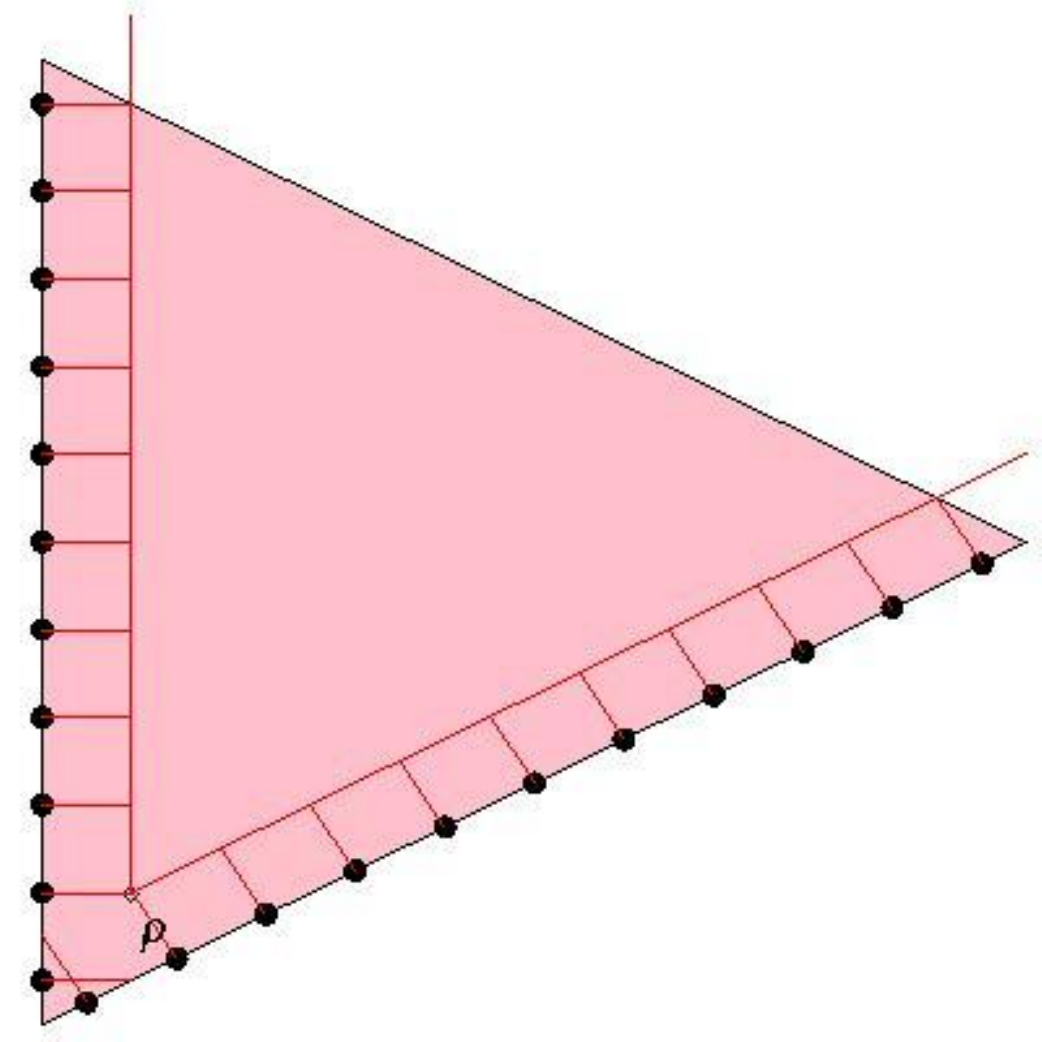}
\caption{ Another set of representatives of admissible orbits of type $(\hgot)$  and their shifts}
 \label{ancestorall}
\end{center}
\end{figure}
\end{exam}

\medskip

We now associate to an admissible coadjoint orbit an element of $R(K)$.

Let $\Pcal=K\xi$ be an admissible coadjoint orbit.
We consider the $\Z/2\Z$-complex space $\bigwedge^{\bullet} \qgot^\xi\otimes \C_{\xi-\rho(\xi)}=E^+\oplus E^-$ with $E^+=\bigwedge^{\rm even} \qgot^\xi\otimes \C_{\xi-\rho(\xi)}$
and
$E^-=\bigwedge^{\rm odd} \qgot^\xi\otimes \C_{\xi-\rho(\xi)}.$

We still denote by $\bigwedge^{\bullet} \qgot^\xi\otimes \C_{\xi-\rho(\xi)}$
the element $E^+-E^-$ in $R(H)$.

\begin{defi}\label{defi:QS}
We define $\QS_K(\Pcal)\in  R(K)$ by the formula:
\begin{equation}\label{eq:induction-index-Ocal}
\QS_K(\Pcal)=\mathrm{Ind}_{K_\xi}^K\left({\bigwedge}^{\bullet} \qgot^\xi\otimes \C_{\xi-\rho(\xi)}\right).
\end{equation}
\end{defi}
Thus $\QS_K(\Pcal)$ is the virtual representation $\mathrm{Ind}_{K_\xi}^K E^+-\mathrm{Ind}_{K_\xi}^K E^-$.
The fact that  $\QS_K(\Pcal)$ is a finite signed sum of representations of $K$  is  easy to see using Frobenius reciprocity formula.

We now interpret
$\QS_K(\Pcal)$ as the equivariant index of a twisted Dirac operator on $\Pcal$.
Consider the $K$-equivariant graded complex vector bundle $\Scal_\Pcal=K\times_{K_\xi} E$.
It defines a $\spinc$-bundle on $\Pcal$ (see \cite{pep-vergne:spin2}).
The determinant line bundle of $\Scal_\Pcal$ is the line bundle $[\C_{2 \xi}]$ and its corresponding moment map
(as defined in \cite{pep-vergne:spin2}) is the canonical injection $\Pcal\to \kgot^*$.
Consider the Dirac operator $D_\Pcal$ associated to  this $\spinc$-bundle.
Then by Atiyah-Bott \cite{AB}, the equivariant index of $D_\Pcal$ is  equal to $\mathrm{Ind}_{K_\xi}^K E^+-\mathrm{Ind}_{K_\xi}^K E^-$.
Thus  $\QS_K(\Pcal)$ coincides with the equivariant index of the  twisted Dirac operator $D_\Pcal$ (and  so belongs to $R(K)$).
For this reason $\QS_K(\Pcal)$ is called the spin quantization of $\Pcal$.

The following proposition is well known. We will  recall  its proof in Lemma \ref{lem:easyregular} in the next subsection.

\begin{prop}\label{prop:easyregular}
\begin{itemize}
\item The map $\Ocal\mapsto \pi_\Ocal:=\QS_K(\Ocal)$ defines a bijection between the set of regular admissible orbits and $\wK$.
\item  $\QS_K(o(\kgot))$ is the trivial representation of $K$.
\end{itemize}
\end{prop}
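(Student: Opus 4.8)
The plan is to reduce everything to the classical Borel--Weil picture via Frobenius reciprocity. Fix a Cartan subgroup $T$ and a positive root system $\Delta^+$, so that $o(\kgot)=K\rho^K$ with $\rho^K\in\tgot^*$. For a regular element $\xi\in\tgot^*$ we may, after acting by the Weyl group, assume $\xi$ is dominant; then $K_\xi=T$, the complex vector space $\qgot^\xi$ is $\bigoplus_{\alpha>0}\kgot_\alpha$, and $\rho(\xi)=\rho^K$. Admissibility of $K\xi$ says exactly that $\xi-\rho^K=:\lambda$ lies in the weight lattice $\Lambda$, and dominance of $\xi$ forces $\lambda$ to be dominant (here one checks $(\lambda,\alpha)=(\xi,\alpha)-(\rho^K,\alpha)\ge 0$ for simple $\alpha$ using that $(\xi,\alpha)>0$ and is ``at least as large as'' $(\rho^K,\alpha)$ — this is the one spot needing the genuine inequality $(\xi,\alpha^\vee)\ge 1$ coming from integrality of $\xi-\rho^K$ plus regularity). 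Conversely any dominant $\lambda\in\Lambda$ gives the regular admissible orbit $K(\lambda+\rho^K)$. So $\Ocal\mapsto\xi-\rho(\xi)$ is a bijection between regular admissible orbits and dominant weights, i.e. $\wK$.

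Next I would identify $\QS_K(\Ocal)$ with $V^K_\lambda$ under this bijection. By Definition \ref{defi:QS},
\[
\QS_K(K\xi)=\mathrm{Ind}_T^K\Bigl({\bigwedge}^{\bullet}\qgot^\xi\otimes\C_{\lambda}\Bigr)
=\mathrm{Ind}_T^K\Bigl(\C_\lambda\otimes\bigotimes_{\alpha>0}(\C_0-\C_\alpha)\Bigr)
=\mathrm{Ind}_T^K\Bigl(\sum_{w\in W}(-1)^{\ell(w)}\,\C_{\lambda+\rho^K-w\rho^K}\Bigr),
\]
using the standard identity $\prod_{\alpha>0}(1-e^{-\alpha})=e^{-\rho^K}\sum_{w\in W}(-1)^{\ell(w)}e^{w\rho^K}$ rewritten with the $J_\xi$-convention so the weights of $\qgot^\xi$ are the positive roots. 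Now apply Frobenius reciprocity: the multiplicity of $V^K_\mu$ in $\mathrm{Ind}_T^K\C_\nu$ is the weight multiplicity of $\nu$ in $V^K_\mu$. Summing against $\sum_w(-1)^{\ell(w)}$ and invoking the Weyl character formula (equivalently, Kostant's multiplicity formula), the alternating sum collapses to $\sum_\mu\dim\bigl(V^K_\mu\bigr)[\mu=\lambda]\,V^K_\mu$, giving $\QS_K(K\xi)=V^K_\lambda$. Taking $\xi=\rho^K$, i.e. $\lambda=0$, yields $\QS_K(o(\kgot))=V^K_0$, the trivial representation, which is the second bullet.

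Finally, bijectivity: injectivity of $\Ocal\mapsto\QS_K(\Ocal)$ is immediate since we have exhibited its composite with $\lambda\mapsto V^K_\lambda$ as the bijection $\Ocal\mapsto\lambda$ followed by the bijection $\lambda\mapsto V^K_\lambda$ of highest-weight theory; surjectivity onto $\wK$ is the same statement read backwards. The main obstacle is purely bookkeeping: getting the sign/complex-structure conventions consistent so that the weights of $\qgot^\xi$ really are $\{\alpha:\alpha>0\}$ and not their negatives (otherwise one lands on $V^K_{-w_0\lambda}$, the dual, which would still be a bijection but would misidentify $\QS_K$), and checking the dominance/integrality step for $\lambda$ carefully for non-simply-connected $K$. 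Everything else is the Weyl character formula packaged through Frobenius reciprocity; as noted in the text, the detailed argument is deferred to Lemma \ref{lem:easyregular}.
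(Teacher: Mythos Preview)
Your argument is correct and parallels the paper's Lemma~\ref{lem:easyregular} in its first half: both identify the regular admissible orbits with $\rho^K+\Lambda_{\geq 0}$ via the observation that $\langle\xi-\rho^K,H_\alpha\rangle$ is an integer and $\langle\xi,H_\alpha\rangle>0$ forces $\langle\xi,H_\alpha\rangle\geq 1$ for simple $\alpha$.

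Where you diverge is in computing $\QS_K(K\xi)$. The paper simply invokes the Atiyah--Bott fixed point formula for the Dirac operator $D_\Ocal$, observing that it reproduces the Weyl character formula verbatim. You instead work directly from the algebraic Definition~\ref{defi:QS}, expand $\bigwedge^\bullet\qgot^\xi$ via the Weyl denominator identity, and pair against $V^K_\mu$ by Frobenius reciprocity; the alternating sum $\sum_w\epsilon(w)\dim(V^K_\mu)_{\xi-w\rho^K}$ is then the coefficient of $e^\xi$ in $\chi_\mu\cdot\sum_w\epsilon(w)e^{w\rho^K}=\sum_w\epsilon(w)e^{w(\mu+\rho^K)}$, which singles out $\mu=\xi-\rho^K$. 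This route is more self-contained (no appeal to \cite{AB} or to the index-theoretic interpretation of $\QS_K$), at the cost of the sign/orientation bookkeeping you flag. The paper's route is shorter but imports a deeper theorem; yours makes the identity a consequence of the Weyl character formula alone.
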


In Section \ref{sectionadmissibleandrep}, we will  describe the virtual  representation   $\QS_K(\Pcal)$ attached to any admissible orbit in terms of regular admissible orbits. It is either equal to $0$, or is an element of $\wK$.

\section{The magical inequality}\label{sec:admissible-Weyl}

In order to parameterize coadjoint orbits,
we choose a Cartan subgroup  $T$ of $K$ with Lie algebra $\tgot$.
Let $\Lambda \subset \tgot^*$ be the lattice of weights of $T$.
Let $W$ be the Weyl group.
 Choose a system of positive roots $\Delta^+\subset {\mathfrak t}^*$, and let as before
$$\rho^K=\frac{1}{2}\sum_{\alpha>0}\alpha.$$
If $\alpha\in \tgot^*$ is a root, we denote by $H_\alpha\in \tgot$ the corresponding coroot (so $\langle \alpha, H_\alpha\rangle=2$).
Then $\langle \rho^K,H_\alpha\rangle=1$ if and only if $\alpha$ is a simple root.

Define the positive closed Weyl chamber by
$$
\tgot^*_{\geq 0}=\{\xi\in \tgot^*;\, \langle \xi,H_\alpha\rangle\geq 0 \,\, {\rm for\, all}\ \alpha> 0\},
$$
and we denote by $\Lambda_{\geq 0}:=\Lambda\cap \tgot^*_{\geq 0}$ the set of dominant weights. Any coadjoint orbit
$\Pcal$ of $K$ is of the form $\Pcal= K\xi$ with $\{\xi\}=\Pcal\cap \tgot^*_{\geq 0}$.

We index the set $\wK$ of  classes of finite dimensional irreducible representations of $K$ by
the set $\rho^K +\Lambda_{\geq 0}$. The irreducible representation $\pi_\lambda$ corresponding to
$\lambda\in \rho^K +\Lambda_{\geq 0}$ is the irreducible representation with
infinitesimal character $\lambda$. Its highest weight is $\lambda-\rho^K$.
The representation $\pi_{\rho^K}$ is the trivial representation of $K$.
The Weyl character formula for the representation $\pi_\lambda$ is, for $X\in \tgot$,
$$
\tr \, \pi_\lambda(e^X)=\frac{\sum_{w\in W} \epsilon(w) e^{i\langle w\lambda,X\rangle}}{\prod_{\alpha>0}
e^{i\langle \alpha,X\rangle/2}- e^{-i\langle \alpha,X\rangle/2}}.
$$

For any $\mu\in \tgot^*$, recall its element $\rho(\mu)\in\kgot^*$ (Definition \ref{def-rho-a}).

 \begin{lem}\label{lem:easyregular}
 Let $\lambda\in \tgot^*_{\geq 0}$ be a regular admissible element of $\kgot^*$.
Then
\begin{enumerate}

\item $\rho(\lambda)=\rho^K$.
\item $\lambda\in \rho^K+\Lambda_{\geq 0}.$
\item$\QS_K(K\lambda)=\pi_\lambda.$
 \end{enumerate}
 \end{lem}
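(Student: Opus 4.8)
The plan is to unwind the definitions and reduce everything to the classical picture of highest weights. First I would prove (1): let $\lambda \in \tgot^*_{\geq 0}$ be regular, so $\kgot_\lambda = \tgot$ and $\qgot^\lambda = \kgot/\tgot$ decomposes as $\bigoplus_{\alpha > 0} \kgot_\alpha$ where the complex structure $J_\lambda$ is chosen so that $\Omega_\lambda(-, J_\lambda -)$ is positive-definite. For $X \in \tgot$ the map $\ad(X)$ acts on the root space $\kgot_\alpha$ by $i\langle \alpha, X\rangle$ (with the sign of $\alpha$ fixed by $J_\lambda$), and the condition that $\Omega_\lambda(Z, J_\lambda Z) > 0$ on $\kgot_\alpha$ forces the relevant root to be one with $(\alpha, \lambda) > 0$. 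Since $\lambda$ is dominant and regular, $(\alpha, \lambda) > 0$ for exactly the positive roots $\alpha > 0$. Hence by Definition \ref{def-rho-a}, $\langle \rho(\lambda), X\rangle = \frac{1}{2i}\tr_{\qgot^\lambda}\ad(X) = \frac{1}{2i}\sum_{\alpha>0} i\langle\alpha,X\rangle = \langle \rho^K, X\rangle$, which gives $\rho(\lambda) = \rho^K$. (This is really the content of the displayed formula $\rho(\xi) = \frac{1}{2}\sum_{(\alpha,\xi)>0}\alpha$ already stated in the excerpt, specialized to the regular dominant case.)

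Next, (2) follows from admissibility together with (1): by Definition \ref{lem:admissible} applied to $\xi = \lambda$ with $K_\lambda = T$, the element $i(\lambda - \rho(\lambda)) = i(\lambda - \rho^K)$ is the differential of a $1$-dimensional representation of $T$, i.e. $\lambda - \rho^K \in \Lambda$. Combined with $\lambda \in \tgot^*_{\geq 0}$ this shows $\lambda - \rho^K$ is a dominant weight, provided one checks $\lambda - \rho^K \in \tgot^*_{\geq 0}$; but in fact $\lambda$ is regular dominant, so $\langle \lambda, H_\alpha \rangle \geq 1$ for every simple $\alpha$ (a regular weight in $\rho^K + \Lambda$ pairs to a positive integer with each simple coroot), hence $\langle \lambda - \rho^K, H_\alpha\rangle \geq 0$ since $\langle \rho^K, H_\alpha\rangle = 1$ for simple $\alpha$. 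Therefore $\lambda \in \rho^K + \Lambda_{\geq 0}$.

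For (3), I would compute $\QS_K(K\lambda) = \Ind_{T}^{K}\left(\bigwedge^{\bullet}\qgot^\lambda \otimes \C_{\lambda - \rho^K}\right)$ directly using the Weyl character formula. Since $\qgot^\lambda = \bigoplus_{\alpha>0}\kgot_\alpha$ as a $T$-module (with the $\alpha > 0$ convention fixed by $J_\lambda$, consistent with (1)), we have $\bigwedge^{\bullet}\qgot^\lambda = \prod_{\alpha>0}(1 - \C_{\alpha})$ in $R(T)$, so $\bigwedge^{\bullet}\qgot^\lambda \otimes \C_{\lambda - \rho^K}$ has character $e^{i\langle \lambda - \rho^K, X\rangle}\prod_{\alpha>0}(1 - e^{i\langle\alpha,X\rangle})$. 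Dividing numerator and denominator by $\prod_{\alpha>0} e^{i\langle\alpha,X\rangle/2}$, this equals $\frac{e^{i\langle\lambda,X\rangle}\prod_{\alpha>0}(e^{-i\langle\alpha,X\rangle/2} - e^{i\langle\alpha,X\rangle/2})}{\prod_{\alpha>0} e^{i\langle\alpha,X\rangle/2} - e^{-i\langle\alpha,X\rangle/2}} = \frac{(-1)^{|\Delta^+|} e^{i\langle\lambda,X\rangle}}{\ldots}$ — more transparently, one uses Frobenius reciprocity / the induction-restriction pairing: $\Ind_T^K$ of a weight character $\C_\mu$ contributes, via the Weyl integration formula and the Weyl denominator $\prod_{\alpha>0}(e^{i\langle\alpha,X\rangle/2} - e^{-i\langle\alpha,X\rangle/2})$, exactly $\sum_{w \in W}\epsilon(w)$-type alternating sums. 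The cleanest route is: in the completed representation ring, $\Ind_T^K \C_\mu$ paired against $\tr\pi_\nu$ picks out the multiplicity of weight $\mu$ in $\pi_\nu$; expanding $\bigwedge^{\bullet}\qgot^\lambda \otimes \C_{\lambda-\rho^K}$ and matching with the numerator $\sum_{w\in W}\epsilon(w)e^{i\langle w\lambda, X\rangle}$ of the Weyl character formula for $\pi_\lambda$ yields $\QS_K(K\lambda) = \pi_\lambda$, using that $\lambda$ is regular so that $w\lambda$ are distinct and the alternating sum $\sum_w \epsilon(w) e^{i\langle w\lambda,X\rangle}$ is precisely $\left(\prod_{\alpha>0}(e^{i\langle\alpha,X\rangle/2}-e^{-i\langle\alpha,X\rangle/2})\right)\cdot\tr\pi_\lambda(e^X)$.

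The main obstacle is bookkeeping with signs and conventions: one must verify that the complex structure $J_\lambda$ on $\qgot^\lambda$ (defined by positivity of $\Omega_\lambda(-, J_\lambda-)$) selects exactly the root spaces $\kgot_\alpha$ with $\alpha > 0$ when $\lambda$ is dominant regular — this is what makes parts (1) and (3) mutually consistent — and then that the Weyl denominator appearing from $\bigwedge^{\bullet}\qgot^\lambda$ matches, up to the correct sign, the denominator in the Weyl character formula as stated. Once the conventions are pinned down, (1) is a direct trace computation, (2) is immediate from admissibility plus regularity, and (3) is the Weyl character formula read backwards; I expect the authors to simply invoke Atiyah–Bott / the Borel–Weil–Bott description already set up around Definition \ref{defi:QS}, so the proof should be short.
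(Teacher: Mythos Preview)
Your proposal is correct and follows essentially the same approach as the paper: part (1) is exactly the specialization of the displayed formula $\rho(\xi)=\tfrac{1}{2}\sum_{(\alpha,\xi)>0}\alpha$ to a regular dominant $\lambda$; part (2) is the same integrality-plus-positivity argument (the key observation being that $\langle\lambda,H_\alpha\rangle$ is an integer and strictly positive, hence $\geq 1$); and for part (3) the paper does precisely what you anticipate at the end, simply invoking that the Weyl character formula coincides with the Atiyah--Bott fixed point formula for the Dirac operator $D_\Ocal$, rather than carrying out the explicit $R(T)$ computation you began.
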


 \begin{proof}
 Let $\lambda\in\tgot^*_{\geq 0}$ be regular and admissible, then $\rho(\lambda)=\rho^K$, so $\lambda\in \{\rho^K+\Lambda\}\cap \tgot^*_{> 0}$.
 If $\alpha$ is a simple root, then
 {\bf the integer} $\langle \lambda-\rho^K,H_\alpha\rangle =\langle \mu,H_\alpha\rangle -1$ is non negative, as $\langle \lambda,H_\alpha\rangle >0$. So $\lambda-\rho^K$ is a dominant weight.

 Let $\Ocal=K\lambda$.
 Weyl character formula  coincide with Atiyah-Bott-Lefschetz formula \cite{AB}
 for the index of the Dirac operator $D_\Ocal$.
 Thus we obtain Lemma  \ref{lem:easyregular} and Proposition \ref{prop:easyregular}. $\Box$
\end{proof}

\medskip

\medskip
The positive Weyl chamber $\CW$ is the  cone determined by the  equations
$\langle \lambda, H_\alpha\rangle \geq 0$ for the simple roots  $\alpha\geq 0$.
We denote by $\mathcal{F}_\kgot$ the set of the relative interiors of the  faces of ${\mathfrak t}^*_{\geq 0}$.
Thus the set $\mathcal{F}_\kgot$ is parameterized by subsets of the simple roots and has $2^r$ elements, $r$ being the rank of $[\kgot,\kgot]$.
Thus ${\mathfrak t}^*_{\geq 0}=\coprod_{\sigma\in {\mathcal F_\kgot}} \sigma$, and we denote by
$ {\mathfrak t}^*_{>0}\in \mathcal{F}_\kgot$ the interior of ${\mathfrak t}^*_{\geq 0}$.

Let  $\sigma\in \mathcal{F}_\kgot$. Thus $\R\sigma$, the linear span of $\sigma$,  is
the subspace determined by $\langle \lambda, H_\alpha\rangle=0$ where the $\alpha$ varies over a subset of the simple roots.

The stabilizer $K_\xi$ does not depend of the choice of the point $\xi\in \sigma$ : we denote it by $K_\sigma$.
The map $\sigma\to \kgot_\sigma$ induces a surjective map  from
$\Fcal_\kgot$ to  $\Hcal_\kgot$. This map may not be injective: in Example \ref{exa:ExampleU3}, $\sigma_1$ and $\sigma_2$ leads to the
same element of $\Hcal_\kgot$, as the corresponding groups $K_{\sigma_1}=S(U(1)\times U(2))$ and  $K_{\sigma_2}=S(U(2)\times U(1))$ are conjugated.

For $\sigma\in \Fcal_\kgot$, we have the decomposition
$\kgot_\sigma=[\kgot_\sigma,\kgot_\sigma]\oplus \zgot(\kgot_\sigma)$ with dual decomposition
$\kgot_\sigma^*=[\kgot_\sigma,\kgot_\sigma]^*\oplus \R\sigma$.
Let
$$
\rho^{K_\sigma}:=\frac{1}{2}\sum_{\stackrel{\alpha>0}{(\alpha,\sigma)=0}}\alpha
$$
be the $\rho$-element of the group $K_\sigma$  associated to the positive root system
$\{\alpha>0,(\alpha,\sigma)=0\}$ for $K_\sigma$.
Then $$\rho^K-\rho^{K_\sigma}=\frac{1}{2}\sum_{\stackrel{\alpha>0}{(\alpha,\sigma)>0}}\alpha,$$
and for any $\mu\in \sigma$,  the element $\rho(\mu)\in \kgot^*$ is equal to
$\rho^K-\rho^{K_\sigma}$.
In particular, $\rho^K-\rho^{K_\sigma}$ vanishes on $[\kgot_\sigma, \kgot_\sigma]$, so
  $\rho^K-\rho^{K_\sigma}\in \R \sigma$, while $\rho^{K_\sigma}\in [\kgot_\sigma,\kgot_\sigma]^*$.
The decomposition $\rho^K=(\rho^K-\rho^{K_\sigma})+\rho^{K_\sigma}$ is an orthogonal decomposition.

Figure \ref{figrho} shows this orthogonal decomposition of $\rho$ for the case $SU(3)$.

\begin{figure}[!h]
\begin{center}
  \includegraphics[width=2 in]{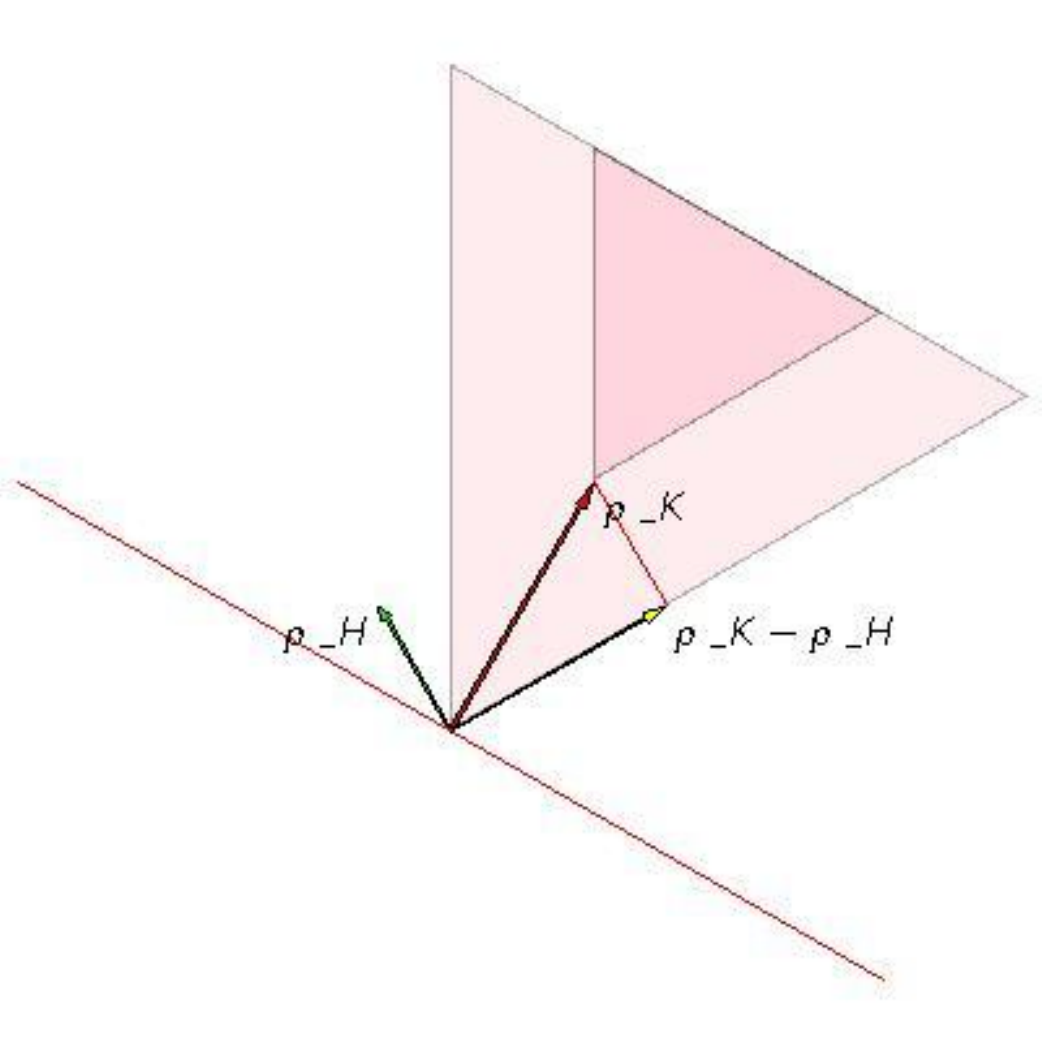}
\caption{Orthogonal decomposition of  $\rho_K$}
 \label{figrho}
\end{center}
\end{figure}

%

The subset $\rho_K+\CW$ of the positive Weyl chamber will be called the shifted Weyl chamber. It is determined by the inequalities $\langle \lambda, H_\alpha\rangle \geq 1$ for any simple root  $\alpha\geq 0$, and thus
$\langle \lambda, H_\alpha\rangle \geq 1$ for any positive root.
Let us list some  immediate  properties of the shifted Weyl chamber.

\begin{prop}\label{prop:beforeinfernal}
\begin{enumerate}
\item

 If $\lambda\in\rho^K+\CW$, then $( \lambda,\lambda) \geq (\lambda,\rho^K)\geq (\rho^K,\rho^K)$.
The equality $( \lambda,\lambda)=(\lambda,\rho^K)$ holds only if $\lambda=\rho^K$.

\item
Let $\sigma\in \Fcal_{\kgot}$.

$\bullet$
The orthogonal projection of $\xi\in\tgot^*_{> 0}$ onto $\R\sigma$ belongs to $\sigma$.

$\bullet$  We have $\rho^K-\rho^{K_\sigma}\in \sigma$ for any $\sigma\in\Fcal_\kgot$.

\item For any $(\hgot)\in \Hcal_\kgot$,
$\|\rho^K\| \geq \|\rho^H\|$, and $\|\rho^K\|= \|\rho^H\|$ only if $H=K$.

\end{enumerate}

\end{prop}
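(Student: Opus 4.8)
For part (1), the key observation is that for $\lambda \in \rho^K + \CW$ we have $\langle \lambda, H_\alpha\rangle \geq 1$ for every positive root $\alpha$, hence $(\lambda - \rho^K, H_\alpha) = \langle \lambda, H_\alpha\rangle - 1 \geq 0$, so $\lambda - \rho^K$ lies in the dominant cone. The plan is to write $\lambda = \rho^K + \nu$ with $\nu$ dominant and expand: $(\lambda,\lambda) = (\rho^K,\rho^K) + 2(\rho^K,\nu) + (\nu,\nu)$ and $(\lambda,\rho^K) = (\rho^K,\rho^K) + (\rho^K,\nu)$. Since $\rho^K$ is dominant (indeed $(\rho^K, H_\alpha) = 1 > 0$ for simple $\alpha$) and $\nu$ is dominant, the pairing $(\rho^K,\nu) \geq 0$; likewise $(\nu,\nu)\geq 0$. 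This gives $(\lambda,\lambda) \geq (\lambda,\rho^K) \geq (\rho^K,\rho^K)$ immediately. For the equality case: $(\lambda,\lambda) = (\lambda,\rho^K)$ forces $(\rho^K,\nu) + (\nu,\nu) = 0$; since both summands are nonnegative, $(\nu,\nu) = 0$, hence $\nu = 0$ and $\lambda = \rho^K$. (One should note that $(\rho^K,\nu)\geq 0$ for $\nu$ dominant follows because $\rho^K$ is a nonnegative combination of fundamental weights, and fundamental weights pair nonnegatively with dominant elements — this is the standard fact that the dominant cone and its image under the identification $\kgot\cong\kgot^*$ are mutually ``obtuse.'')

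For part (2), first bullet: write $\xi \in \tgot^*_{>0}$ and let $p(\xi)$ be its orthogonal projection onto $\R\sigma$. Using the orthogonal decomposition $\tgot^* = \R\sigma \oplus [\kgot_\sigma,\kgot_\sigma]^* \cap \tgot^*$ (more precisely the span of the coroots $H_\alpha$ with $(\alpha,\sigma)=0$ is orthogonal to $\R\sigma$), for a simple root $\alpha$ with $(\alpha,\sigma)>0$ one checks $\langle p(\xi), H_\alpha\rangle = \langle \xi, H_\alpha\rangle - \langle \xi - p(\xi), H_\alpha\rangle$; since $\xi - p(\xi)$ lies in the span of the $H_\beta$ with $(\beta,\sigma)=0$... actually the cleaner route is: $\R\sigma$ is cut out by $\langle\cdot,H_\alpha\rangle = 0$ for $\alpha$ simple with $(\alpha,\sigma)=0$, and $\sigma$ is the subset of $\R\sigma$ where $\langle\cdot,H_\alpha\rangle > 0$ for the remaining simple roots. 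Since orthogonal projection onto $\R\sigma$ commutes with pairing against any $H_\alpha$ lying in $\R\sigma$'s direction... I will argue instead that the coroots $H_\alpha$ for $(\alpha,\sigma)>0$, when projected appropriately, retain positivity — the transparent statement is that $\langle p(\xi),H_\alpha\rangle = \langle \xi, H_\alpha \rangle$ is false in general, so one uses: $\xi - p(\xi) \perp \R\sigma$ means $\xi - p(\xi)$ is a combination of the $H_\beta$ with $(\beta,\sigma)=0$; pairing with $H_\alpha$ for $(\alpha,\sigma)>0$ simple gives $\langle\xi - p(\xi),H_\alpha\rangle = \sum c_\beta (H_\beta, H_\alpha) \leq 0$ since distinct simple coroots pair nonpositively and $c_\beta \geq 0$ (this last needs $\xi$ dominant). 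Hence $\langle p(\xi),H_\alpha\rangle \geq \langle \xi,H_\alpha\rangle > 0$, so $p(\xi)\in\sigma$. The second bullet then follows: $\rho^K - \rho^{K_\sigma}$ is, by the formula preceding Proposition~\ref{prop:beforeinfernal}, equal to $\rho(\mu)$ for $\mu\in\sigma$, and it was shown there to lie in $\R\sigma$; it remains to see it lies in the \emph{open} face, which follows since $\rho^K - \rho^{K_\sigma} = \frac12\sum_{\alpha>0,(\alpha,\sigma)>0}\alpha$ is the orthogonal projection of $\rho^K \in \tgot^*_{>0}$ onto $\R\sigma$, so the first bullet applies.

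For part (3): by Lemma~\ref{lem:conjugacy}, $(\hgot) = (\kgot_\sigma)$ for some $\sigma\in\Fcal_\kgot$, and $\|\rho^H\| = \|\rho^{K_\sigma}\| = \|\rho^S\|$. The orthogonal decomposition $\rho^K = (\rho^K - \rho^{K_\sigma}) + \rho^{K_\sigma}$ established above gives $\|\rho^K\|^2 = \|\rho^K - \rho^{K_\sigma}\|^2 + \|\rho^{K_\sigma}\|^2 \geq \|\rho^{K_\sigma}\|^2$, hence $\|\rho^K\| \geq \|\rho^H\|$. Equality forces $\rho^K - \rho^{K_\sigma} = 0$, i.e. the sum $\sum_{\alpha>0,(\alpha,\sigma)>0}\alpha$ vanishes; since all terms are positive roots lying in a common open half-space (they pair positively with $\sigma$... more carefully, with any interior point of $\tgot^*_{>0}$), the sum can vanish only if it is empty, meaning every simple root satisfies $(\alpha,\sigma)=0$, i.e. $\sigma = \{0\}$ up to the center, so $K_\sigma = K$ and $H = K$.

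\emph{Main obstacle.} The only genuinely delicate point is the positivity argument in part (2) (and its reuse in (3)): one must be careful that orthogonal projection onto $\R\sigma$ does not destroy the defining inequalities of the face, which hinges on the ``obtuseness'' of the simple coroot system (distinct simple coroots have nonpositive inner product) together with the dominance of the point being projected. Everything else is bookkeeping with the orthogonal decomposition $\rho^K = (\rho^K-\rho^{K_\sigma}) + \rho^{K_\sigma}$ already recorded in the text.
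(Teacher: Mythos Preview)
Your proof is correct and follows essentially the same approach as the paper's: both argue part~(1) by writing $\lambda=\rho^K+c$ with $c$ dominant and using that inner products of dominant elements are nonnegative; part~(2) via the obtuseness $(\alpha_i,\alpha_j)\leq 0$ for distinct simple roots (the paper compresses this to a single sentence, while you spell out that $\xi-p(\xi)$ is a nonnegative combination of the simple coroots in $I$ and hence pairs nonpositively with the coroots in $J$); and part~(3) via the orthogonal decomposition $\rho^K=(\rho^K-\rho^{K_\sigma})+\rho^{K_\sigma}$. Your treatment of the equality case in~(3) is more explicit than the paper's, which leaves it to the reader, and the assertion $c_\beta\geq 0$ in part~(2) is the standard fact that a dominant element of the root span of a root system is a nonnegative combination of simple roots (equivalently, the inverse Cartan matrix has nonnegative entries)---worth stating, since the paper's one-line proof also relies on it implicitly.
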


\begin{proof}
If $\lambda=\rho^K+c$, with $c\in \CW$,  inequalities
$( \lambda,\lambda)\geq  (\lambda,\rho^K)\geq (\rho^K,\rho^K)$ follows from the fact that $(\lambda,c)$ and $(\rho^K,c)$ are non negative,
as the scalar product of two elements of $\CW$ is non negative. Equality $( \lambda,\lambda)=  (\lambda,\rho^K)$ implies $\lambda=\rho^K$.

The second point   follows from the fact that the
 dual cone to $\CW$ is generated by the simple roots $\alpha_i$, and
$(\alpha_i,\alpha_j)\leq 0$, if $i\neq j$.

We have the orthogonal decomposition $\rho^K=\rho^{K_\sigma}+ (\rho^K-\rho^{K_\sigma})$: hence
$\rho^K-\rho^{K_\sigma}$, which is the orthogonal projection of $\rho^K$ on $\R\sigma$, belongs to $\sigma$.

For the third point, we might choose $H$ conjugated to $K_\sigma$, so  $\|\rho^K\|^2=\|\rho^{K_\sigma}\|^2+
\|\rho^K-\rho^{K_\sigma}\|^2$. $\Box$
\end{proof}

\medskip

The following theorem analyze the distance of a point in $\tgot^*$ to the shifted Weyl chamber. It is illustrated in Figure \ref{distance} in the case $SU(3)$.

\begin{figure}[!h]
\begin{center}
  \includegraphics[width=2 in]{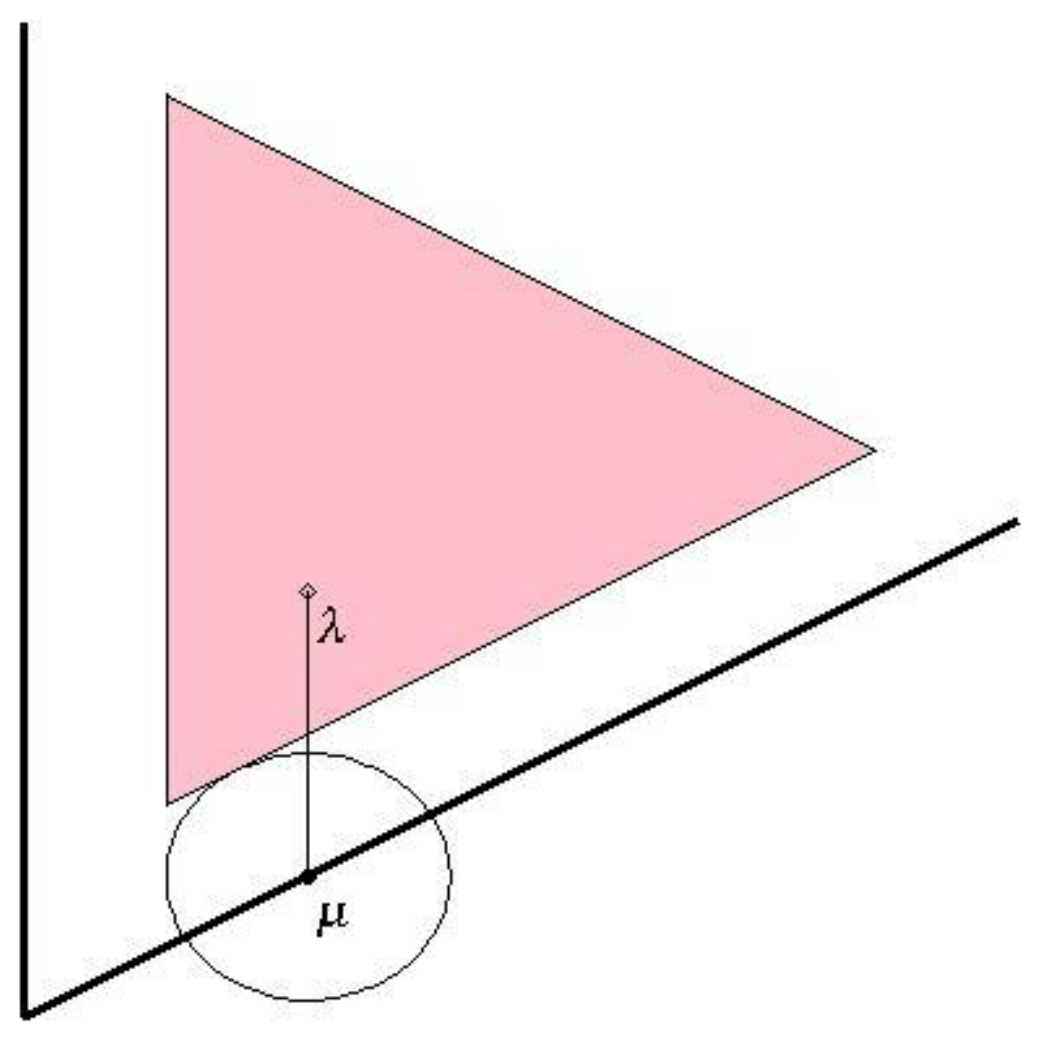}
\caption{ Distance of a singular element $\mu$  to an element
$\lambda$ in the shifted Weyl chamber}
 \label{distance}
\end{center}
\end{figure}

\begin{theo} \label{prop:infernal}

$\bullet$
 If $\lambda\in\rho^K+\CW$ and $\mu\in\tgot^*$, then:
 \begin{equation} \label{eq:crucial}
 \|\lambda-\mu\|^2\geq
\frac{1}{2}
\sum_{\stackrel{\alpha>0}{(\alpha,\mu)=0}}
(\lambda,\alpha).
\end{equation}

$\bullet$
If $\lambda\in\rho^K+\CW$ and $\mu\in\tgot^*$, then:
\begin{equation}\label{eq:crucial2}
\frac{1}{2}
\sum_{\stackrel{\alpha>0}{(\alpha,\mu)=0}}
(\lambda,\alpha)
\geq \|\rho^{K_\mu}\|^2.
\end{equation}

$\bullet$
If one of the inequalities  (\ref{eq:crucial}) or (\ref{eq:crucial2}) is an equality,
 then
 $\mu$ belongs to $\CW$, and  $\lambda=\mu+\rho^{K_\sigma}$ where $\sigma\in \Fcal_\kgot$ is the stratum of $\CW$ containing $\mu$.
Thus the two inequalities (\ref{eq:crucial}) or (\ref{eq:crucial2}) are equalities.
 In particular, $\lambda-\rho(\lambda)=\mu-\rho(\mu)$.
\end{theo}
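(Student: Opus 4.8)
The plan is to reduce everything to a single computation on the Weyl chamber. Write $\lambda=\rho^K+c$ with $c\in\CW$, so $\langle\lambda,H_\alpha\rangle\ge 1$ for every positive root $\alpha$, in particular $(\lambda,\alpha)>0$ for all $\alpha>0$. Fix $\mu\in\tgot^*$ and set $\Delta^+_\mu=\{\alpha>0 : (\alpha,\mu)=0\}$, the positive roots of $\kgot_\mu$ (after moving $\mu$ into the closed chamber by $W$, which changes neither side of \eqref{eq:crucial} nor \eqref{eq:crucial2} once we simultaneously move $\lambda$; but note $\lambda$ need not stay in the shifted chamber — so more carefully, I would first replace the pair $(\lambda,\mu)$ by $(w\lambda,w\mu)$ only after establishing the inequalities are $W$-equivariant in the right sense, or simply argue directly as below without moving $\mu$). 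The first inequality \eqref{eq:crucial} I expect to prove by decomposing $\mu-\lambda$: write $p$ for the orthogonal projection of $\mu$ onto $\bigcap_{\alpha\in\Delta^+_\mu}\ker\alpha = \R\sigma_\mu$-type subspace. The cleanest route is: $\|\lambda-\mu\|^2 = \|\lambda-p\|^2+\|p-\mu\|^2 \ge \|\lambda - p\|^2$ where... actually the key trick is the elementary inequality $\|v\|^2 \ge (v,\alpha)$ is false in general, so instead I would use that for $c\in\CW$ and any root $\alpha$ with $(\alpha,\mu)=0$, summing a suitable family of such inequalities. The honest approach: use $\|\lambda-\mu\|^2 \ge \|P(\lambda-\mu)\|^2$ for the orthogonal projection $P$ onto the span of $\{H_\alpha : \alpha\in\Delta^+_\mu\}$, note $P\mu$ lies in that span while $(\lambda - P\lambda)\perp$ each such $\alpha$, so $(\lambda,\alpha)=(P\lambda,\alpha)$, and then we must show $\|P\lambda - P\mu\|^2 \ge \tfrac12\sum_{\alpha\in\Delta^+_\mu}(\lambda,\alpha)$. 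This is an inequality inside the (dual of the) root subsystem $\Delta_\mu$, so by restriction we may assume $K=K_\mu$, i.e. $\Delta^+_\mu=\Delta^+$ and $\mu=0$; then we must prove $\|\lambda\|^2 \ge \tfrac12\sum_{\alpha>0}(\lambda,\alpha) = (\lambda,\rho^K)$, which is exactly Proposition \ref{prop:beforeinfernal}(1), with equality iff $\lambda=\rho^K$. But wait — when we restricted, $\lambda$ became $P\lambda$, and we need $P\lambda$ to lie in $\rho^{K_\mu}+\CW_{K_\mu}$; this requires checking that $(P\lambda, H_\alpha)=(\lambda,H_\alpha)\ge 1$ for simple roots $\alpha$ of $\kgot_\mu$, which holds since those are positive roots of $\kgot$ and $\lambda\in\rho^K+\CW$. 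So \eqref{eq:crucial} follows, with equality iff $P\mu=\mu$ (i.e. $\mu\in\R\sigma_\mu$) \emph{and} $P\lambda=\rho^{K_\mu}$.

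For the second inequality \eqref{eq:crucial2}, the statement $\tfrac12\sum_{\alpha\in\Delta^+_\mu}(\lambda,\alpha)=(\lambda,\rho^{K_\mu})\ge\|\rho^{K_\mu}\|^2=(\rho^{K_\mu},\rho^{K_\mu})$ is again an inequality inside $\kgot_\mu$: it says $(P\lambda-\rho^{K_\mu},\rho^{K_\mu})\ge 0$, and since $\rho^{K_\mu}$ lies in the closure of the chamber $\CW_{K_\mu}$ and $P\lambda - \rho^{K_\mu}$ does too (by the computation just made, $\langle P\lambda,H_\alpha\rangle\ge 1$ for simple roots of $\kgot_\mu$, so $\langle P\lambda-\rho^{K_\mu},H_\alpha\rangle\ge 0$), this is the standard fact that the scalar product of two elements of $\CW$ is non-negative — the same fact cited in the proof of Proposition \ref{prop:beforeinfernal}. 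Equality holds iff $P\lambda = \rho^{K_\mu}$, i.e. iff $(P\lambda-\rho^{K_\mu},\alpha)=0$ for all $\alpha\in\Delta^+_\mu$.

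It remains to unwind the equality case. If either \eqref{eq:crucial} or \eqref{eq:crucial2} is an equality, then by the above $P\lambda=\rho^{K_\mu}$; and if it is \eqref{eq:crucial} that is an equality we additionally get $\mu=P\mu$, but in fact $P\lambda=\rho^{K_\mu}$ alone forces both: once $P\lambda=\rho^{K_\mu}$ we have $\tfrac12\sum_{\alpha\in\Delta^+_\mu}(\lambda,\alpha)=\|\rho^{K_\mu}\|^2$, so \eqref{eq:crucial2} is an equality, and then $\|\lambda-\mu\|^2 = \|\lambda-P\lambda\|^2 + \|P\mu-\mu\|^2+\|P\lambda-P\mu\|^2$; hmm, this needs $\|\lambda - P\lambda\|^2 = \tfrac12\sum(\lambda,\alpha)$ which is not generally true — so \eqref{eq:crucial} being an equality is the genuinely stronger condition, forcing in addition $\mu\in\R\sigma$ where $\sigma$ is the face with $\Delta^+_\mu = \{\alpha>0:(\alpha,\sigma)=0\}$, hence (by Proposition \ref{prop:beforeinfernal}(2), the projection of a chamber point lands in the open face) $\mu\in\sigma\subset\CW$. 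Now $\lambda = P\lambda + (\lambda-P\lambda)$ with $P\lambda=\rho^{K_\mu}=\rho^{K_\sigma}$ and $\lambda-P\lambda = \mu$ (this is the remaining identification: $\lambda-P\lambda\in\R\sigma$, and computing $\langle\lambda-P\lambda,H_\alpha\rangle$ against the simple roots $\alpha$ with $(\alpha,\sigma)>0$ shows it equals $\mu$ — this uses $\lambda\in\rho^K+\CW$ together with $\lambda-\mu\perp\mu$ forced by the equality). Hence $\lambda=\mu+\rho^{K_\sigma}$, both inequalities are equalities, and finally $\rho(\lambda)=\rho^K-\rho^{K_\sigma}$ since $\lambda$ lies in the face $\sigma$ shifted... more precisely $\lambda=\mu+\rho^{K_\sigma}$ with $\mu\in\sigma$ gives $\kgot_\lambda=\kgot_\mu=\kgot_\sigma$ (as $\rho^{K_\sigma}$ is $K_\sigma$-regular in $[\kgot_\sigma,\kgot_\sigma]^*$), so $\rho(\lambda)=\rho^K-\rho^{K_\sigma}=\rho(\mu)$, whence $\lambda-\rho(\lambda)=\mu+\rho^{K_\sigma}-(\rho^K-\rho^{K_\sigma})$; comparing with $\mu-\rho(\mu)=\mu-(\rho^K-\rho^{K_\sigma})$ we would want these equal, i.e. $\rho^{K_\sigma}$ should be absorbed — this works precisely because here $\rho(\lambda)$ must be recomputed: $\lambda$ is regular? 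No, $\lambda=\mu+\rho^{K_\sigma}$ need not be regular. I would double-check the bookkeeping of which $\rho(\cdot)$ is meant, using the displayed formulas $\rho(\mu)=\tfrac12\sum_{(\alpha,\mu)>0}\alpha$ and the convention that $\rho(\lambda)$ is computed via the stabilizer of $\lambda$.

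\textbf{Main obstacle.} The heart of the argument — the two inequalities — is a clean reduction to Proposition \ref{prop:beforeinfernal} via orthogonal projection onto the root subsystem of $\kgot_\mu$, and I do not expect difficulty there. The delicate part is the equality analysis: pinning down exactly which of the two equalities is stronger, and then extracting the clean conclusion $\lambda=\mu+\rho^{K_\sigma}$ with $\mu\in\sigma$ from the projection identities, including the verification that $\lambda-P\lambda$ equals $\mu$ and not merely lies in the same subspace. Getting the final line $\lambda-\rho(\lambda)=\mu-\rho(\mu)$ right is a matter of careful bookkeeping with the several $\rho$-type elements ($\rho(\xi)$ versus $\rho^{K_\sigma}$ versus $\rho^K-\rho^{K_\sigma}$) defined in Section 3, and that is where I would be most careful.
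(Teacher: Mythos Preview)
Your approach is the paper's: decompose $\tgot^* = \zgot^* \oplus \agot^*$ with $\agot^*$ spanned by the roots orthogonal to $\mu$, check that the $\agot^*$-component $P\lambda$ lies in the shifted Weyl chamber of $K_\mu$, and apply Proposition~\ref{prop:beforeinfernal}(1) there. Two simplifications you are missing. First, $P\mu = 0$ automatically, since $(\mu,\alpha)=0$ for every $\alpha\in\Delta^+_\mu$ by definition; the decomposition is simply $\|\lambda-\mu\|^2 = \|(I-P)\lambda - \mu\|^2 + \|P\lambda\|^2$ and there is no need to ``move $\mu$''. Second, any $\lambda\in\rho^K+\CW$ satisfies $\langle\lambda,H_\alpha\rangle\geq 1>0$ for every positive root, hence is regular with $\rho(\lambda)=\rho^K$; your worry at the end that $\lambda$ ``need not be regular'' is unfounded, and the identity $\lambda-\rho(\lambda)=\mu-\rho(\mu)$ is then a one-line check.

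There is one genuine gap in your equality analysis. You write ``$\sigma$ is the face with $\Delta^+_\mu = \{\alpha>0:(\alpha,\sigma)=0\}$'' as though such a face of $\CW$ always exists. It does not: for $\mu\notin\CW$ the simple roots of $\Delta^+_\mu$ are in general \emph{not} simple in $\Delta^+$, and then $\zgot^*$ is not of the form $\R\sigma$ for any $\sigma\in\Fcal_\kgot$. The step you are missing (and which the paper supplies) is to extract this from $P\lambda=\rho^{K_\mu}$: for each simple root $\alpha$ of $K_\mu$ one has $\langle\lambda,H_\alpha\rangle=\langle P\lambda,H_\alpha\rangle=\langle\rho^{K_\mu},H_\alpha\rangle=1$, hence $\langle\rho^K,H_\alpha\rangle=1$ and $\alpha$ is simple in $K$. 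Only after this is $\zgot^*=\R\sigma$ for a genuine face $\sigma$, and only then can Proposition~\ref{prop:beforeinfernal}(2) be invoked to place $\mu=(I-P)\lambda$ inside $\sigma$. By contrast, your stated main obstacle --- showing $(I-P)\lambda=\mu$ rather than merely $\in\R\sigma$ --- is immediate from $\|(I-P)\lambda-\mu\|^2=0$. Your observation that equality in \eqref{eq:crucial} is strictly stronger than equality in \eqref{eq:crucial2} is correct and matches what the paper's proof actually establishes.
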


\begin{proof}
Let $\kgot_\mu$ be the centralizer of $\mu$ and let $\zgot$ be the center of $\kgot_\mu$.
 Consider the orthogonal decomposition $\tgot^*=\zgot^*\oplus \agot^*$ where $\agot$ is a Cartan subalgebra for $[\kgot_\mu,\kgot_\mu]$, that is $\agot=\sum_{(\alpha,\mu)=0} \R H_\alpha$.
Let $\rho^{K_\mu}\in \agot^*$ be the $\rho$ element
for the system
$\Delta_+^1=\{\alpha>0, (\alpha,\mu)=0\}$ of $[\kgot_\mu,\kgot_\mu]$.

Let us write $\lambda=\rho^K+c$, with $c$ dominant,
 and
decompose $\rho^K=p_0+p_1$, $c=c_0+c_1$, with
$p_0,c_0\in \zgot^*$, $p_1,c_1\in \agot^*$.
Thus $\lambda=\lambda_0+\lambda_1$, with
$\lambda_0\in \zgot^*$ and $\lambda_1=p_1+c_1$.
Now $p_1$ belongs to
 the shifted  Weyl chamber in $\agot^*$.
 Indeed, for any $\alpha>0$ such that $(\alpha,\mu)=0$, we have
$\langle p_1,H_\alpha\rangle=\langle \rho^K,H_\alpha\rangle\geq 1.$
Similarly $c_1$ is dominant
for the system $\Delta_+^1$.

As $\mu\in \zgot^*$, we have
$\|\lambda-\mu\|^2= \|\lambda_0-\mu\|^2+\|p_1+c_1\|^2.$
Using the first point of \ref{prop:beforeinfernal}, we obtain
$$\|\lambda-\mu\|^2=\|\lambda_0-\mu\|^2+\|p_1+c_1\|^2\geq (p_1+c_1,  \rho^{K_\mu})\geq  \|\rho^{K_\mu}\|^2.$$

As  $$(p_1+c_1,  \rho^{K_\mu})=(\lambda,\rho^{K_\mu})= \frac{1}{2}
\sum_{\stackrel{\alpha>0}{(\alpha,\mu)=0}}
(\lambda,\alpha)$$ we obtain  Inequalities (\ref{eq:crucial}) and  (\ref{eq:crucial2}).

If the inequality  $\|\lambda-\mu\|^2\geq (p_1+c_1,  \rho^{K_\mu})$ is an equality, then
$$\|\lambda_0-\mu\|^2+\left(\|p_1+c_1\|^2-(p_1+c_1,  \rho^{K_\mu})\right)=0.$$
Both terms of the left hand side are non negative.
 Thus, using again the first point of Proposition \ref{prop:beforeinfernal}, we see that necessarily
$c_1=0$, $p_1=\rho^{K_\mu}$, and $\lambda_0=\mu$.
Thus for roots $\alpha\in \Delta_+^1$,
$\langle \rho^{K_\mu},H_\alpha\rangle=\langle \rho^K,H_\alpha\rangle$.
As $\rho^{K_\mu}$ takes value $1$ on simple roots for $K_\mu$, it follows that the set $S_1$ of simple roots for the system
 $\Delta_+^1$ is contained in the set of simple roots for $\Delta^+$.
 As $\agot=\oplus_{\alpha\in S_1}\R H_\alpha$, the orthogonal $\zgot$ of $\agot$ is  $\R \sigma$ for the face $\sigma$ of $\tgot^*$ orthogonal to the subset  $ S_1$ of simple roots.
We then have  $K_\mu=K_\sigma$.
Furthermore,
$\lambda=\mu+\rho^{K_\sigma}$.
Thus $\mu$ is the projection of $\lambda$ on $\R \sigma$, so $\mu\in \sigma\subset \CW.$ As $\rho(\lambda)=\rho^K$, and $\rho(\mu)=\rho^K-\rho^{K_\sigma}$, we obtain $\lambda-\rho(\lambda)=\mu-\rho(\mu)$.
So all assertions are proved. $\Box$
\end{proof}

\medskip

In this article, we will only use the following obvious corollary of Inequalities (\ref{eq:crucial})
and (\ref{eq:crucial2}). However, in our application \cite{pep-vergne:cras}, we will need both inequalities.
\begin{coro} (The magical inequality) \label{cor:magic}
 If $\lambda\in\rho^K+\CW$ and $\mu\in\tgot^*$, then:
 \begin{equation} \label{eq:magic}
 \|\lambda-\mu\|\
\geq \|\rho^{K_\mu}\|.
\end{equation}
The equality holds if and only if
$\mu$ is dominant and $\lambda-\rho(\lambda)=\mu-\rho(\mu)$.
\end{coro}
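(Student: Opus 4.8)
The plan is to derive Corollary~\ref{cor:magic} directly from Theorem~\ref{prop:infernal}, since the Corollary is essentially a repackaging of Inequalities~(\ref{eq:crucial}) and~(\ref{eq:crucial2}) together with the equality analysis. First I would observe that chaining~(\ref{eq:crucial}) and~(\ref{eq:crucial2}) gives, for $\lambda\in\rho^K+\CW$ and $\mu\in\tgot^*$,
\[
\|\lambda-\mu\|^2\ \geq\ \frac{1}{2}\sum_{\stackrel{\alpha>0}{(\alpha,\mu)=0}}(\lambda,\alpha)\ \geq\ \|\rho^{K_\mu}\|^2,
\]
and taking square roots yields~(\ref{eq:magic}) immediately. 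So the content of the Corollary beyond Theorem~\ref{prop:infernal} is only the precise characterization of the equality case.

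Next I would handle the two directions of the equality statement. If $\|\lambda-\mu\|=\|\rho^{K_\mu}\|$, then by the displayed chain both~(\ref{eq:crucial}) and~(\ref{eq:crucial2}) must be equalities; the last bullet of Theorem~\ref{prop:infernal} then tells us that $\mu\in\CW$ (so $\mu$ is dominant) and that $\lambda-\rho(\lambda)=\mu-\rho(\mu)$. Conversely, suppose $\mu$ is dominant and $\lambda-\rho(\lambda)=\mu-\rho(\mu)$. Since $\mu$ is dominant, $\mu$ lies in some stratum $\sigma\in\Fcal_\kgot$, and by the computation recalled before Theorem~\ref{prop:infernal} we have $\rho(\mu)=\rho^K-\rho^{K_\sigma}$; also $\rho(\lambda)=\rho^K$ because $\lambda\in\rho^K+\CW$ is regular (or one can simply read it off from Lemma~\ref{lem:easyregular}). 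Hence $\lambda-\rho^K=\mu-(\rho^K-\rho^{K_\sigma})$, i.e.\ $\lambda=\mu+\rho^{K_\sigma}$. Since $\rho^{K_\sigma}\in[\kgot_\sigma,\kgot_\sigma]^*$ and $\mu\in\R\sigma$, which is the orthogonal complement of $[\kgot_\sigma,\kgot_\sigma]^*$ in $\tgot^*$, this is an orthogonal decomposition and $K_\mu=K_\sigma$, so $\|\lambda-\mu\|^2=\|\rho^{K_\sigma}\|^2=\|\rho^{K_\mu}\|^2$, giving equality in~(\ref{eq:magic}).

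There is essentially no hard part here: the whole weight of the argument sits in Theorem~\ref{prop:infernal}, and the Corollary is a formal consequence. The only point requiring a little care is the converse direction of the equality case, where one must translate the hypothesis $\lambda-\rho(\lambda)=\mu-\rho(\mu)$ into the identity $\lambda=\mu+\rho^{K_\sigma}$ and then recognize this as an orthogonal decomposition relative to the splitting $\tgot^*=\R\sigma\oplus[\kgot_\sigma,\kgot_\sigma]^*$; this uses the orthogonality of the decomposition $\rho^K=(\rho^K-\rho^{K_\sigma})+\rho^{K_\sigma}$ noted in the text and the fact that a dominant $\mu$ projects to itself on $\R\sigma$ (second point of Proposition~\ref{prop:beforeinfernal}). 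I would write all of this in a couple of short paragraphs, citing~(\ref{eq:crucial}), (\ref{eq:crucial2}) and the equality clause of Theorem~\ref{prop:infernal} for the forward direction, and the $\rho$-element computations preceding the theorem for the converse.
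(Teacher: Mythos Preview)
Your proposal is correct and follows exactly the approach the paper intends: the paper simply declares the Corollary to be an ``obvious corollary of Inequalities~(\ref{eq:crucial}) and~(\ref{eq:crucial2})'' without writing out a separate proof, and you have faithfully supplied the details---chaining the two inequalities for~(\ref{eq:magic}), reading off the forward implication of the equality case from the third bullet of Theorem~\ref{prop:infernal}, and for the converse translating $\lambda-\rho(\lambda)=\mu-\rho(\mu)$ into $\lambda=\mu+\rho^{K_\sigma}$ via $\rho(\lambda)=\rho^K$ and $\rho(\mu)=\rho^K-\rho^{K_\sigma}$, then using the orthogonality of $\R\sigma$ and $[\kgot_\sigma,\kgot_\sigma]^*$. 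One minor remark: your appeal to the second point of Proposition~\ref{prop:beforeinfernal} for ``a dominant $\mu$ projects to itself on $\R\sigma$'' is unnecessary, since $\mu\in\sigma\subset\R\sigma$ already.
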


Let us give some consequences of the magical inequality (\ref{eq:magic}).
Let us define the notion of very regular element.

 \begin{defi}
 Let $\lambda\in \kgot^*$  be a regular element.
 Then $\lambda$ determines a closed positive Weyl chamber $C_\lambda\subset \kgot_\lambda^*$.
We say that $\lambda$ is very regular if
$\lambda\in \rho(\lambda)+C_\lambda$.

\end{defi}

In other words, a very regular element is an element which is conjugated to an element of the shifted Weyl chamber. Note that regular admissible elements are very regular.

\begin{coro}
Let $\lambda,\mu$ be two elements of $\kgot^*$.
Assume that $\lambda$ is very regular, then $\|\lambda-\mu\|\geq \|\rho^{K_\mu}\|$. The equality holds if and only if
$\mu\in C_\lambda$ and $\lambda-\rho(\lambda)=\mu-\rho(\mu)$.
\end{coro}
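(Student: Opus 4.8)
The plan is to reduce this corollary to the magical inequality (Corollary \ref{cor:magic}) by passing to the group $K_\lambda$, exploiting that $\lambda$ is conjugated into the shifted Weyl chamber of $K_\lambda$. First I would observe that both the inequality $\|\lambda-\mu\|\geq\|\rho^{K_\mu}\|$ and the equality condition are invariant under the $K$-action applied simultaneously to $\lambda$ and $\mu$: the norm is $K$-invariant, $\rho^{K_\mu}$ depends only on the conjugacy class of $K_\mu$, and $\rho(\lambda),\rho(\mu)$ transform equivariantly, so the condition $\lambda-\rho(\lambda)=\mu-\rho(\mu)$ is $K$-stable, as is the condition $\mu\in C_\lambda$ (since $C_\lambda\subset\kgot_\lambda^*$ moves with $\lambda$). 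Hence, after replacing the pair $(\lambda,\mu)$ by a suitable conjugate, I may assume $\lambda$ lies in $\rho^K+\CW$, i.e.\ $\lambda$ is in the shifted positive Weyl chamber of $K$ itself with respect to a fixed choice of $T,\Delta^+$. In that case $K_\lambda=T$, $C_\lambda=\tgot^*_{\geq 0}=\CW$, and $\rho(\lambda)=\rho^K$ by Lemma \ref{lem:easyregular}(1) (since $\lambda\in\tgot^*_{>0}$).

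Next I would handle $\mu$. The element $\mu\in\kgot^*$ need not lie in $\tgot^*$, but I can conjugate it into $\tgot^*$ by an element of $K$ — however that would move $\lambda$ out of the shifted chamber again. Instead, the cleaner route is: since $\lambda\in\tgot^*_{>0}$, write $\mu = k\cdot\nu$ with $\nu\in\tgot^*$ and note $\|\lambda-\mu\| = \|k^{-1}\lambda - \nu\|\geq \operatorname{dist}(\nu, K\cdot\lambda)$... but this loses the shifted-chamber hypothesis on $k^{-1}\lambda$. So I prefer the following: apply the already-conjugated hypothesis $\lambda\in\rho^K+\CW$ and treat the distance from $\mu$ to $K\lambda$. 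Actually the simplest correct reduction keeps $\mu$ fixed and uses that $\|\lambda-\mu\|\geq\|\lambda-\mu'\|$ where $\mu'$ is... no. The honest statement to use is: for $\lambda\in\rho^K+\CW$ fixed and \emph{any} $\mu\in\kgot^*$, we have $\|\lambda-\mu\|=\min_{w}\|\lambda-w\mu_0\|$ where $\mu_0\in\tgot^*_{\geq 0}$ is the dominant representative of $K\mu$ and $w$ ranges over $W$ — this is standard (the closest point of a coadjoint orbit to a point of the chamber is the dominant representative, or more precisely one minimizes over the Weyl orbit inside $\tgot^*$; one reduces to $\tgot^*$ because $\lambda$ regular forces the nearest point of $K\mu$ to $\lambda$ to commute with $\lambda$, hence to lie in $\tgot^*$). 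For each such $w\mu_0\in\tgot^*$, Theorem \ref{prop:infernal} / Corollary \ref{cor:magic} gives $\|\lambda-w\mu_0\|\geq\|\rho^{K_{w\mu_0}}\| = \|\rho^{K_\mu}\|$, the last equality because $K_{w\mu_0}$ is conjugate to $K_\mu$. Taking the minimum over $w$ yields $\|\lambda-\mu\|\geq\|\rho^{K_\mu}\|$.

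For the equality case: if $\|\lambda-\mu\|=\|\rho^{K_\mu}\|$, pick $w$ realizing the minimum, so $\|\lambda-w\mu_0\|=\|\rho^{K_{w\mu_0}}\|$; by the equality clause of Corollary \ref{cor:magic}, $w\mu_0$ is dominant and $\lambda-\rho(\lambda)=w\mu_0-\rho(w\mu_0)$, and moreover $\lambda = w\mu_0 + \rho^{K_\sigma}$ with $\sigma$ the face containing $w\mu_0$. Since $w\mu_0$ is dominant and in the $W$-orbit of $\mu_0$, we have $w\mu_0=\mu_0$; thus $\mu_0$ lies in the closed chamber $\tgot^*_{\geq 0} = \CW = C_{\rho^K+\cdots}$, which after undoing the conjugation means $\mu\in C_\lambda$, and $\lambda-\rho(\lambda)=\mu-\rho(\mu)$ transports back. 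Conversely, if $\mu\in C_\lambda$ and $\lambda-\rho(\lambda)=\mu-\rho(\mu)$, conjugate so that $\lambda\in\rho^K+\CW$ and $\mu\in\tgot^*_{\geq 0}$; then $\mu-\rho(\mu)=\lambda-\rho^K\in\Lambda_{\geq0}$ forces, by the structure in Theorem \ref{prop:infernal}, $\lambda=\mu+\rho^{K_\sigma}$ and hence $\|\lambda-\mu\|=\|\rho^{K_\sigma}\|=\|\rho^{K_\mu}\|$.

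The main obstacle I anticipate is the step asserting that the nearest point of the coadjoint orbit $K\mu$ to the regular element $\lambda$ can be taken in $\tgot^*$ (equivalently, that $\operatorname{dist}(\lambda,K\mu)$ is computed within $\tgot^*$ over the finite Weyl orbit): this requires a short argument — e.g.\ a critical-point analysis of $k\mapsto\|\lambda-k\mu\|^2$ on $K/K_\mu$, whose critical points satisfy $[\lambda, k\mu]=0$, i.e.\ $k\mu\in\tgot^*$ since $\lambda$ is regular — and it is the one place where regularity (rather than mere very-regularity) of $\lambda$ is genuinely used. Everything else is bookkeeping with the conjugation-invariance of the quantities involved and a direct appeal to Corollary \ref{cor:magic}.
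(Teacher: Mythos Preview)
Your approach is essentially the paper's own: reduce to $\tgot^*$ via a critical-point argument on $\mu'\mapsto\|\lambda-\mu'\|^2$ over $K\mu$, then invoke Corollary~\ref{cor:magic}. The paper compresses this into three lines.

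Two small repairs are needed. First, the displayed claim $\|\lambda-\mu\|=\min_{w}\|\lambda-w\mu_0\|$ is false for general $\mu\in\kgot^*$; what is true (and what you actually use) is $\|\lambda-\mu\|\geq\min_{\mu'\in K\mu}\|\lambda-\mu'\|=\min_{w}\|\lambda-w\mu_0\|$, the last equality coming from your critical-point observation. Second, in the equality case you establish that the minimizer $w\mu_0=\mu_0$ is dominant and satisfies $\lambda-\rho(\lambda)=\mu_0-\rho(\mu_0)$, but then assert ``after undoing the conjugation $\mu\in C_\lambda$'' without ever showing $\mu=\mu_0$. The conjugation you performed moved the pair $(\lambda,\mu)$ together; it did not send $\mu$ to $\mu_0$. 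The missing step: equality throughout the chain forces $\mu$ itself to realize the minimum distance to $\lambda$, hence (by the same critical-point argument $[\lambda,\mu]=0$) $\mu\in\tgot^*$; now Corollary~\ref{cor:magic} applied directly to $\mu\in\tgot^*$ gives $\mu$ dominant and $\lambda-\rho(\lambda)=\mu-\rho(\mu)$, which is what you want. With these fixes the argument is complete and matches the paper's.
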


\begin{proof}
Consider the minimum  of
$\|\lambda-\mu'\|^2$ when $\mu'$ varies in $K\mu$.
Using the differential, we see that a point where the minimum is reached  is in the Cartan subalgebra determined by $\lambda$.
We can then conclude by  Corollary (\ref{cor:magic}). $\Box$
\end{proof}

\begin{figure}[!h]
\begin{center}
  \includegraphics[width=2 in]{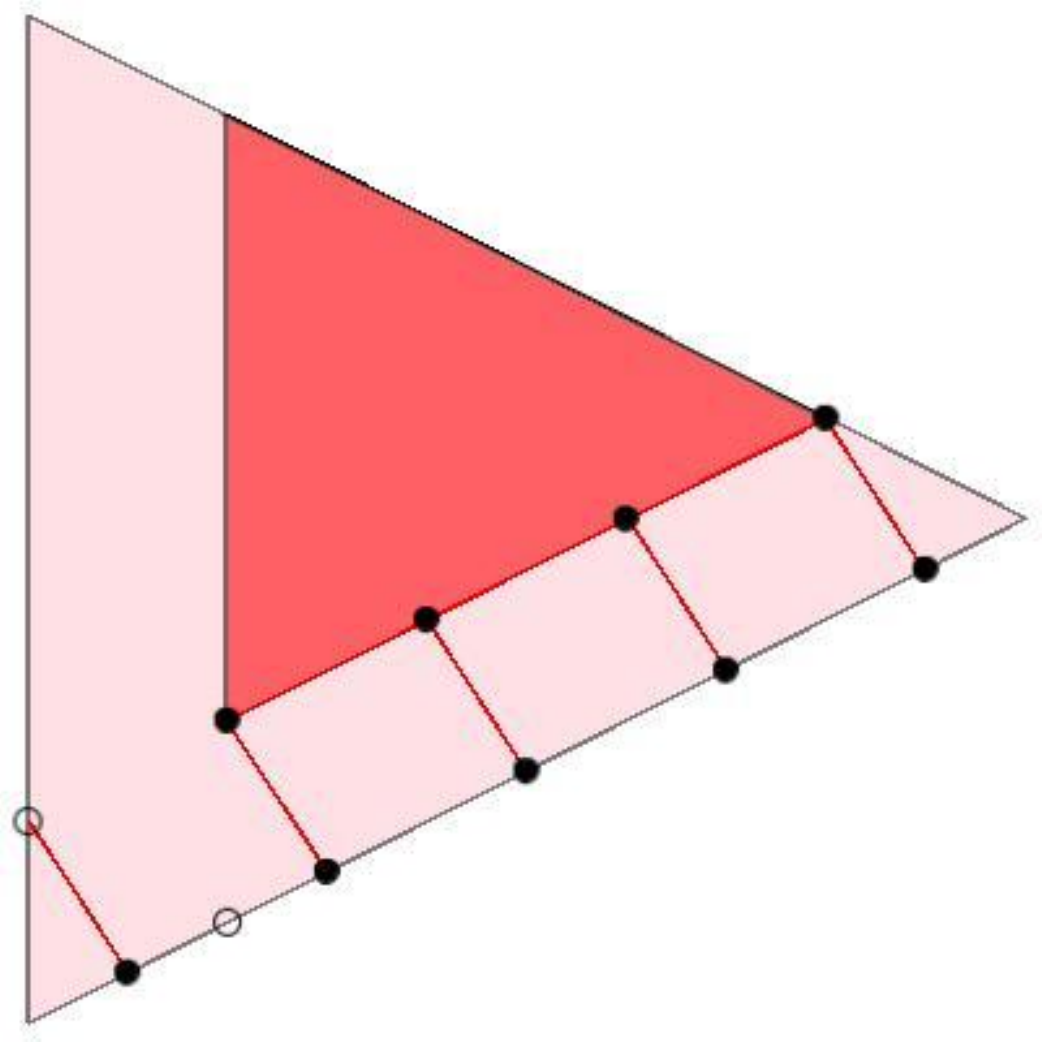}
\caption{Shifts  of admissible orbits}
 \label{fig:shifts}
\end{center}
\end{figure}

As consequence, we obtain Theorem \ref{theoinfernalintro} stated in the introduction.
 \begin{coro}\label{corodistance}
 The distance between a regular admissible coadjoint orbit $\Ocal$ and an orbit
$\Pcal$ in a Dixmier sheet $S$ is greater or equal to $\|\rho_S\|$.
\end{coro}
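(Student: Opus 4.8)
The plan is to obtain this as an immediate consequence of the magical inequality in its orbit form (the corollary stated just above), the only additional work being a translation of notation.

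First I would record that being \emph{very regular} is a property of a coadjoint orbit rather than of a chosen representative: it asks that the element be conjugate to a point of the shifted Weyl chamber $\rho^K+\CW$, a condition visibly invariant under the coadjoint action. Since, as noted after Corollary \ref{cor:magic}, regular admissible elements are very regular, every point $\lambda$ of a regular admissible orbit $\Ocal$ is very regular. Now let $\Pcal$ lie in the Dixmier sheet $S=\kgot^*_{(\hgot)}$ and pick $\mu\in\Pcal$. Applying the very regular form of the magical inequality to the pair $(\lambda,\mu)$ gives $\|\lambda-\mu\|\ge\|\rho^{K_\mu}\|$.

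It remains to identify $\|\rho^{K_\mu}\|$ with $\|\rho_S\|$. By definition of the Dixmier sheet $S$, the stabilizer $K_\mu$ is conjugate to $H$, and by definition $\|\rho_S\|=\|\rho^H\|$; since the inner product on $\kgot^*$ is $K$-invariant, the norm of a $\rho$-element is unchanged under conjugation, and it does not depend on the choices entering its definition (two positive systems for $[\kgot_\mu,\kgot_\mu]$ differ by an element of the Weyl group of $[\kgot_\mu,\kgot_\mu]$, which preserves norms). Hence $\|\rho^{K_\mu}\|=\|\rho^H\|=\|\rho_S\|$, so $\|\lambda-\mu\|\ge\|\rho_S\|$ for every $\lambda\in\Ocal$ and $\mu\in\Pcal$. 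As the distance between $\Ocal$ and $\Pcal$ is the infimum of these quantities, the corollary follows.

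The substantive content --- the magical inequality and, inside the proof of the preceding corollary, the gradient argument showing that the minimum of $\|\lambda-\mu'\|$ over $\mu'\in K\mu$ is attained in the Cartan subalgebra determined by $\lambda$ --- is already established, so I do not expect a genuine obstacle here; the one point to be careful about is precisely the bookkeeping that matches $\|\rho^{K_\mu}\|$, attached in Theorem \ref{prop:infernal} to the particular positive system $\{\alpha>0,\,(\alpha,\mu)=0\}$ of $[\kgot_\mu,\kgot_\mu]$, to the invariantly defined $\|\rho_S\|$.
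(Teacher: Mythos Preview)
Your proposal is correct and follows exactly the route the paper takes: the paper states Corollary~\ref{corodistance} simply as a consequence of the preceding ``very regular'' corollary, and your argument spells out precisely that deduction, including the identification $\|\rho^{K_\mu}\|=\|\rho_S\|$ via conjugacy of stabilizers.
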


Let us now study the admissible coadjoint orbits  and their shifts.
We see that if $\mu$ is in $\sigma$, its shift is  $s(K\mu)=K(\mu+\rho^{K_\sigma})$.
Furthermore, $\mu\in \sigma$ is admissible if and only if $\mu+\rho^{K_\sigma}\in \rho^K+\Lambda$.

Theorem \ref{prop-general-admissible} below says  in particular that  if a shift
$\mu \to \mu+ \rho^{K_\sigma}$ of  an admissible element $\mu\in \sigma$ is regular, then it is in the shifted  Weyl chamber $\rho_K+\CW$.

Figure \ref{fig:shifts} illustrate this fact in the case $SU(3)$.
In this figure, the two points $\omega_1,\omega_2$ (which are not admissible) are represented by
the empty circles.
The shift of the admissible element $\frac{1}{2}\omega_1$ is $\omega_2$, and so is singular and is not admissible.

 \begin{theo}\label{prop-general-admissible}
Let $\sigma$ be a relative interior of a face of $\CW$.

\begin{enumerate}

\item
Let $\mu\in \sigma$.  If  $\mu+\rho^{K_\sigma}$  is regular admissible, then
$\mu$ is admissible.

\item
If $\mu\in\sigma$ is admissible, and  $\mu+\rho^{K_\sigma}$ is regular, then $\mu+\rho^{K_\sigma}$  is regular  admissible.
Moreover $\mu + \rho^{K_\sigma}\in \rho^K+ (\Lambda_{\geq 0}\cap \overline \sigma)$.
In particular $\mu + \rho^{K_\sigma}$ is in the shifted Weyl chamber.

\item  If $\mu\in\sigma$ is admissible, we have
$$
\QS_K(K\mu)=
\begin{cases}
   0\qquad\qquad {\rm if}\ \mu+\rho^{K_\sigma}\ \mathrm{is\ singular},\\
   \pi_{\mu+\rho^{K_\sigma}}\qquad \, {\rm if}\ \mu+\rho^{K_\sigma}\ \mathrm{is\ regular}.
\end{cases}
$$
\end{enumerate}
\end{theo}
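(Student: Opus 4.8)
The three items should be treated in sequence, with item (2) doing the real work via the magical inequality (Corollary~\ref{cor:magic}); items (1) and (3) then follow with little extra effort. I would first reduce everything to statements in $\tgot^*$: fix a Cartan subgroup $T\subset K$, a positive system $\Delta^+$, and choose the representative $\mu\in\sigma$ inside the closed chamber $\CW$, so that $\kgot_\mu=\kgot_\sigma=:\kgot_\sigma$, $\rho(\mu)=\rho^K-\rho^{K_\sigma}$, and $\rho^{K_\sigma}\in[\kgot_\sigma,\kgot_\sigma]^*\subset\tgot^*$. The admissibility of $K\mu$ is then the condition $\mu-\rho(\mu)=\mu-(\rho^K-\rho^{K_\sigma})\in\Lambda$, equivalently $\mu+\rho^{K_\sigma}-\rho^K\in\Lambda$, i.e. $\mu+\rho^{K_\sigma}\in\rho^K+\Lambda$; this is exactly the reformulation already noted in the text just before the theorem. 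So throughout, \emph{admissibility of $K\mu$ $\Longleftrightarrow$ $\lambda:=\mu+\rho^{K_\sigma}\in\rho^K+\Lambda$}, and the orbit of $\lambda$ is the shift $s(K\mu)$.

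\textbf{Item (1).} Suppose $\lambda=\mu+\rho^{K_\sigma}$ is regular admissible. By Lemma~\ref{lem:easyregular}, after conjugating $\lambda$ into $\tgot^*_{\geq 0}$ we have $\lambda\in\rho^K+\Lambda_{\geq 0}$, and in particular $\lambda\in\rho^K+\Lambda$. But $\lambda-\rho^K=(\mu-\rho(\mu))$ lies in $\tgot^*$ already (no conjugation moved us out of $\tgot^*$, since $\mu,\rho^{K_\sigma},\rho^K$ are all in $\tgot^*$), so $\mu-\rho(\mu)\in\Lambda$, which is precisely admissibility of $K\mu$. The only subtlety is checking that the $W$-conjugation bringing $\lambda$ to dominant position preserves membership in $\rho^K+\Lambda$ — and it does, because $\Lambda$ is $W$-stable and $W\rho^K\subset\rho^K+\Lambda$ (differences of $W$-conjugates of $\rho^K$ are in the root lattice $\subset\Lambda$).

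\textbf{Item (2), the crux.} Assume $K\mu$ is admissible and $\lambda:=\mu+\rho^{K_\sigma}$ is regular. Pick a very regular element $\nu$ in the orbit of $\lambda$ (possible since $\lambda$ is regular: conjugate into $\rho^K+\CW$). The plan is to show $\lambda$ is itself already in $\rho^K+\CW$, by applying the equality case of the magical inequality to the pair $(\lambda,\mu)$. Concretely: $\lambda\in\rho^K+\CW$ would follow once we know $\|\lambda-\mu\|=\|\rho^{K_\mu}\|$ together with $\mu$ dominant and $\lambda-\rho(\lambda)=\mu-\rho(\mu)$ — but Corollary~\ref{cor:magic} runs the other way. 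The correct route is: since $\mu\in\sigma\subset\CW$ and $\lambda=\mu+\rho^{K_\sigma}$ with $\rho^{K_\sigma}\in[\kgot_\sigma,\kgot_\sigma]^*$, the vector $\lambda$ lies in the affine subspace $\mu+[\kgot_\sigma,\kgot_\sigma]^*$; one checks directly that $\langle\lambda,H_\alpha\rangle=\langle\rho^{K_\sigma},H_\alpha\rangle\geq 1$ for simple roots $\alpha$ of $K_\sigma$ (these vanish on $\sigma$), while $\langle\lambda,H_\alpha\rangle=\langle\mu,H_\alpha\rangle>0$ with $\mu$ regular-in-$\R\sigma$ direction forces $\langle\lambda,H_\alpha\rangle\geq$ (something positive). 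The honest argument: because $\lambda$ is regular and $\mu$ is the orthogonal projection of $\lambda$ onto $\R\sigma$ (as $\rho^{K_\sigma}\perp\R\sigma$), the computation in the proof of Theorem~\ref{prop:infernal} shows $\|\lambda-\mu\|^2=\|\rho^{K_\sigma}\|^2$ and $\mu-\rho(\mu)=\lambda-\rho(\lambda)$; this is exactly the equality case, so by Corollary~\ref{cor:magic} applied to any very regular conjugate, $\lambda$ must actually lie in $\rho^K+\CW$ — more precisely $\lambda\in\rho^K+(\Lambda_{\geq 0}\cap\overline\sigma)$ since $\mu\in\sigma$ pins $\lambda$ to the face $\overline\sigma$. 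Regularity of $\lambda$ plus $\lambda\in\rho^K+\Lambda$ then gives $\lambda\in\rho^K+\Lambda_{\geq 0}$ with $\lambda$ strictly dominant, hence regular admissible by Lemma~\ref{lem:easyregular}. \emph{I expect the main obstacle here to be the bookkeeping that $\lambda$ lands not merely in $\rho^K+\CW$ but in the closed face $\rho^K+(\Lambda_{\geq 0}\cap\overline\sigma)$}; this requires observing that $S_1$, the simple roots of $K_\sigma$, is a subset of the simple roots of $K$ (the argument at the end of the proof of Theorem~\ref{prop:infernal}), so that $\sigma$ is genuinely a face of $\CW$ and $\mu=\lambda-\rho^{K_\sigma}$ sits in its relative interior.

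\textbf{Item (3).} With (2) in hand, $\QS_K(K\mu)=\mathrm{Ind}_{K_\mu}^K\big(\bigwedge^\bullet\qgot^\mu\otimes\C_{\mu-\rho(\mu)}\big)$. I would identify this induced virtual module via the geometry of Section: $\QS_K(K\mu)$ is the equivariant index of the $\spinc$ Dirac operator $D_{K\mu}$, and its infinitesimal character / highest weight data is governed by $\mu+\rho^{K_\sigma}=\lambda$. When $\lambda$ is singular, the Weyl-type numerator appearing in the Atiyah--Bott--Lefschetz fixed point formula has a repeated $W$-orbit and the alternating sum cancels, giving $0$; this is the standard "Bott vanishing" phenomenon and can be proved by an explicit Weyl-character-formula computation, or cited from \cite{pep-vergne:spin2}. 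When $\lambda$ is regular, part (2) says $\lambda\in\rho^K+\Lambda_{\geq 0}$, and the same fixed-point formula gives the Weyl character formula for $\pi_\lambda$, exactly as in Lemma~\ref{lem:easyregular}; hence $\QS_K(K\mu)=\pi_{\mu+\rho^{K_\sigma}}$. The cleanest packaging is to invoke the "induction in stages / holomorphic induction" identity $\mathrm{Ind}_{K_\sigma}^K\circ\,\QS_{K_\sigma}^{\mathrm{hol}}=\QS_K^{\mathrm{hol}}$ applied to the point $\mu\in\zgot(\kgot_\sigma)^*$ shifted by $\rho^{K_\sigma}$, reducing item (3) to the rank-and-chamber statement already established.
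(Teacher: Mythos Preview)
Your items (1) and (3) are fine. Item (1) is in fact cleaner than the paper's version: the paper proves (1) by invoking the equality case of the magical inequality, whereas you bypass it entirely via the $W$-invariance of $\rho^K+\Lambda$. Item (3) matches the paper's Atiyah--Bott fixed-point computation.

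The gap is in item (2). You correctly identify the magical inequality as the tool, but the application as written is circular. Corollary~\ref{cor:magic} has $\lambda\in\rho^K+\CW$ as a \emph{hypothesis}, so you cannot invoke its equality case to \emph{conclude} $\lambda\in\rho^K+\CW$. Your workaround ``pick a very regular element $\nu$ in the orbit of $\lambda$ (possible since $\lambda$ is regular: conjugate into $\rho^K+\CW$)'' is not justified: regularity alone does not give very-regularity (e.g.\ $\tfrac{1}{2}\rho^K$ is regular but not very regular). Likewise the claim that ``the computation in the proof of Theorem~\ref{prop:infernal} shows $\mu-\rho(\mu)=\lambda-\rho(\lambda)$'' presupposes $\lambda$ dominant, which is precisely what is at issue.

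The missing step is one you already have from item (1), run in the other direction. From $\mu$ admissible you get $\lambda\in\rho^K+\Lambda$; since $\rho^K+\Lambda$ is $W$-stable (your own observation in (1)) and $\lambda$ is regular, $\lambda$ is regular admissible, hence very regular. \emph{Now} the magical inequality (in its very-regular form, the corollary immediately following Corollary~\ref{cor:magic}) applies to the pair $(\lambda,\mu)$: since $\|\lambda-\mu\|=\|\rho^{K_\sigma}\|=\|\rho^{K_\mu}\|$ one is in the equality case, hence $\lambda-\rho(\lambda)=\mu-\rho(\mu)$, giving $\rho(\lambda)=\rho^K$ and so $\lambda\in\CW$. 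The face statement $\lambda-\rho^K\in\overline\sigma$ then follows because $\lambda-\rho^K=\mu-(\rho^K-\rho^{K_\sigma})\in\R\sigma$ is dominant. This is exactly the paper's route; the only organizational difference is that the paper runs the magical-inequality step already inside item (1), obtaining $\lambda\in\CW$ there as a byproduct, so that item (2) reduces to the one-line observation ``$\lambda$ is regular admissible, cite (1)''. Your streamlined (1) loses that byproduct, so you must supply it explicitly in (2).
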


\begin{proof}
Let us prove the two first points.

Let $\mu\in \sigma$ such that
$\lambda=\mu+\rho^{K_\sigma}$ is regular
admissible. Thus
$\|\lambda-\mu\|^2=\|\rho^{K_\mu}\|^2$.
The point $\lambda$ being regular and admissible, $\lambda$ is very regular.
 We use the magical inequality.
The equality $\|\lambda-\mu\|^2=\|\rho^{K_\mu}\|^2$ implies  $\lambda-\rho(\lambda)=\mu-\rho(\mu)=\mu- (\rho^K-\rho^{K_\sigma})$.
Thus $\rho(\lambda)=\rho^K$, so $\lambda\in \CW$. Furthermore $\mu$ is admissible.

Conversely, we see that if $\mu$ is admissible, and $\lambda=\mu+\rho^{K_\sigma}$ is regular, then $\lambda$ is regular admissible.
 We have seen that $\lambda\in \CW$.
 As $\lambda$ is regular admissible, it is in $\rho^K+\CW$.
The element $\lambda-\rho^K=\mu-(\rho^{K}-\rho^{K_\sigma})$ is in $\R \sigma$. As it is dominant, it is in $\overline{\sigma}$.

Let us prove the last point. Let $\qgot^{\mu}$ be the complex space   $\kgot/\kgot_{\mu}$ equipped with the complex structure $J_\mu$.
The equivariant index $\Theta$ of the Dirac operator $D_{K\mu}$ associated to  the $\spinc$-bundle
$\Scal_{K\mu}=K\times_{K_\mu}(\bigwedge^{\bullet} \qgot^\mu\otimes \C_{\mu-\rho^K+\rho^{K_\sigma}})$ is given by Atiyah-Bott fixed point formula: for $X\in \tgot$,
$\Theta(e^X)=\sum_{w\in W/W_\mu} w\cdot \frac{e^{i\langle \mu,X\rangle}}{\prod_{ \langle \alpha,\mu\rangle>0}e^{i\langle \alpha,X\rangle/2}-e^{-i\langle \alpha,X\rangle/2}}$.
Here $W_\mu$, the stabilizer of $\mu$ in $W$,
is equal to the Weyl group of the group $K_\sigma$.
Using $\sum_{w\in W_\sigma}\epsilon(w)e^{w\rho^{K_\sigma}}=
\prod_{\alpha>0, \langle \alpha,\sigma\rangle=0}(e^{\alpha/2}-e^{-\alpha/2})$, we obtain
\begin{equation}\label{equ:induced}
\Theta(e^X)= \frac{
\sum_{w\in W} \epsilon(w)e^{i\langle w(\mu+\rho^{K_\sigma}),X\rangle}}{\prod_{ \alpha>0}e^{i\langle \alpha,X\rangle/2}-e^{-i\langle \alpha,X\rangle/2}}.
\end{equation}

If $\mu+\rho^{K_\sigma}$ is singular, $\Theta$  is equal to zero.
If  $\mu+\rho^{K_\sigma}$ is regular, thanks to the first point,  $\mu+\rho^{K_\sigma}$
is in $\rho^K+\Lambda_{\geq 0}$, so
$\Theta=\pi_{\mu+\rho^{K_\sigma}}$. $\Box$

\end{proof}

\bigskip

We  proved that if  $\mu\in \sigma$ is admissible {\bf and}
 its shift $\mu+\rho^{K_\sigma}$ is regular, then  $\mu+\rho^{K_\sigma}$ is admissible and dominant.
 However, as illustrated in the following examples, if $\mu+\rho^{K_\sigma}$ is singular,

  $\bullet$
    $\mu+\rho^{K_\sigma}$
  is not necessarily dominant,

$\bullet$
 $\mu+\rho^{K_\sigma}$
might be admissible or not admissible.

\begin{exam}
Consider the group $K=U(7)$ and the face \break  $\sigma=\{\lambda_1>\lambda_2=\lambda_3=\lambda_4=\lambda_5=\lambda_6>\lambda_7\}$ of the Weyl chamber.
Here $\rho^K= (3,2,1,0,-1,-2,-3)$ and $\rho^{K_\sigma}= (0,2,1,0,-1,-2,0)$. Then $\mu:=(1,0,0,0,0,0,-1)$ is an admissible element of $\sigma$, but its shift $\mu+\rho^{K_\sigma}=(1,2,1,0,-1,-2,0)$ is singular, not admissible and  not dominant.

\end{exam}

\begin{exam}
Consider the group $K=U(5)$ and the face $\sigma=\{\lambda_1=\lambda_2>\lambda_3>\lambda_4=\lambda_5\}$ of the Weyl chamber.
Then
$\mu=(1/2,1/2,0,-1/2,-1/2)$ is an admissible element of $\sigma$.
  Its shift $\mu+\rho^{K_\sigma}=(1,0,0,0,-1)$  is singular and admissible.
\end{exam}

For application to the equivariant index of Dirac operators on general $\spinc$ $K$-manifolds,
 we  reformulate   Inequality (\ref{eq:crucial}) independently of a choice of a Weyl chamber using normalized traces.

\begin{defi}
Let $N$ be a real vector space and $b: N\to N$ a linear transformation, such that $-b^2$ is diagonalizable
with non negative eigenvalues, and  let $|b|=\sqrt{-b^2}$.
We denote by $\ntr_N|b|=\frac{1}{2}\tr_N|b|$, that is half of the trace of the action of $|b|$ in the real vector space $N$.
We call  $\ntr_N|b|$  the normalized trace of $b$.

\end{defi}

If  $N$ is an Euclidean space and $b$ a skew-symmetric  transformation of $N$,
then $-b^2$ is diagonalizable with  non negative eigenvalues.

For any $b\in\kgot$ and $\mu\in \kgot^*$ fixed by $b$,
we may consider the action $\ad(b) : \kgot_\mu\to \kgot_\mu$. The corresponding normalized trace
$\ntr_{\kgot_\mu}|\ad(b)|$ is denoted simply by
$\ntr_{\kgot_\mu}|b|.$

\begin{prop}\label{prop:infernal-with-trace}
Let $b\in \kgot$ and
 denote by $\beta$ the corresponding element in $\kgot^*$.
Let $\lambda,\mu$ be elements of $\kgot^*$ fixed by $b$.
 Assume that $\lambda$ is very regular and that $\mu-\lambda=\beta$.
Then
 $$\|\beta\|^2 \geq \frac{1}{2} \ntr_{\kgot_\mu} |b|.$$
If the equality holds, then
 $\mu$ belongs to  the positive Weyl chamber  $C_\lambda$ and
\begin{enumerate}
\item $\lambda-\rho(\lambda)= \mu- \rho(\mu)$, hence $\lambda$ is admissible if and only if $\mu$ is admissible,
\item $s(K\mu)=K\lambda$.
\end{enumerate}
\end{prop}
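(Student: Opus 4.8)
The plan is to reduce Proposition \ref{prop:infernal-with-trace} to Corollary \ref{cor:magic} (equivalently, to Theorem \ref{prop:infernal}) by passing to a Cartan subalgebra adapted to the data $b,\lambda,\mu$. First I would observe that since $b$ fixes both $\lambda$ and $\mu$, and since $\lambda$ is very regular, the group $K_\lambda$ is a maximal torus; choose $T$ so that $\lambda$ lies in the shifted Weyl chamber $\rho^K+\CW$, i.e. identify $\kgot_\lambda^*$ with $\tgot^*$ and $C_\lambda$ with $\tgot^*_{\geq 0}$. The subtlety is that $b\in\kgot$ need not lie in $\tgot$: it only fixes $\lambda$ and $\mu$, so it lies in $\kgot_\lambda\cap\kgot_\mu=\tgot\cap\kgot_\mu=\tgot$ — wait, $b$ fixes $\lambda$ hence $b\in\kgot_\lambda=\tgot$. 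So in fact $b\in\tgot$ automatically, $\beta\in\tgot^*$, and $\mu=\lambda-\beta\in\tgot^*$. Good: no averaging argument is needed here, all three elements already live in the same Cartan.

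Next I would identify the normalized trace with the root sum appearing in (\ref{eq:crucial}). For $b\in\tgot$ the operator $\ad(b)$ on $\kgot_\mu$ preserves the root space decomposition of $[\kgot_\mu,\kgot_\mu]$ and acts by $0$ on the center; on the real plane spanned by the $\pm\alpha$ root vectors (for $\alpha>0$ with $(\alpha,\mu)=0$) it acts with eigenvalues $\pm i\langle\alpha,b\rangle$, so $|\ad(b)|$ has eigenvalue $|\langle\alpha,b\rangle|$ with multiplicity $2$ on that plane. Hence
\[
\ntr_{\kgot_\mu}|b|=\tfrac12\tr_{\kgot_\mu}|\ad(b)|=\sum_{\stackrel{\alpha>0}{(\alpha,\mu)=0}}|\langle\alpha,b\rangle|.
\]
Now $\langle\alpha,b\rangle=(\alpha,\beta)=(\alpha,\lambda-\mu)=(\alpha,\lambda)$ since $(\alpha,\mu)=0$ for these roots, and $(\alpha,\lambda)\geq 0$ because $\lambda\in\rho^K+\CW\subset\tgot^*_{\geq 0}$ and $\alpha$ is a positive root. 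So the absolute values are superfluous and
\[
\tfrac12\,\ntr_{\kgot_\mu}|b|=\tfrac12\sum_{\stackrel{\alpha>0}{(\alpha,\mu)=0}}(\alpha,\lambda),
\]
which is exactly the right-hand side of (\ref{eq:crucial}). Since $\|\beta\|^2=\|\lambda-\mu\|^2$, the inequality $\|\beta\|^2\geq\tfrac12\,\ntr_{\kgot_\mu}|b|$ is (\ref{eq:crucial}) verbatim.

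For the equality case, I would simply invoke the equality clause of Theorem \ref{prop:infernal}: equality in (\ref{eq:crucial}) forces $\mu$ to be dominant (i.e. $\mu\in\tgot^*_{\geq 0}=C_\lambda$) and $\lambda=\mu+\rho^{K_\sigma}$ with $\sigma$ the open face containing $\mu$, whence $\lambda-\rho(\lambda)=\mu-\rho(\mu)$. The statement ``$\lambda$ admissible $\iff$ $\mu$ admissible'' then follows because admissibility of $K\nu$ depends only on the lattice class of $\nu-\rho(\nu)$ (Definition \ref{lem:admissible}), and $\lambda-\rho(\lambda)=\mu-\rho(\mu)$. Finally, $s(K\mu)=K(\mu+\rho^{K_\mu})=K(\mu+\rho^{K_\sigma})=K\lambda$ by the description of the shift preceding Example \ref{exa:ExampleU3} (here $\rho^{K_\mu}$ and $\rho^{K_\sigma}$ agree since $K_\mu=K_\sigma$ for $\mu\in\sigma$). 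The only genuine point requiring care — and the one I would write out carefully — is the computation of $\ntr_{\kgot_\mu}|b|$ and the sign $(\alpha,\lambda)\geq 0$ that removes the absolute values; everything else is a direct citation of Theorem \ref{prop:infernal} and the definitions.
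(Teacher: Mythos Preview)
Your proof is correct and follows the same approach as the paper's: place everything in $\tgot^*=\kgot_\lambda^*$ via $b\in\kgot_\lambda=\tgot$, identify $\tfrac12\,\ntr_{\kgot_\mu}|b|$ with the right-hand side of (\ref{eq:crucial}), and then invoke Theorem~\ref{prop:infernal} for both the inequality and the equality clause. One harmless slip: since the statement has $\beta=\mu-\lambda$ (not $\lambda-\mu$), you actually get $\langle\alpha,b\rangle=-(\alpha,\lambda)$ for roots with $(\alpha,\mu)=0$, but the absolute value still returns $(\alpha,\lambda)\geq 0$, so the conclusion stands unchanged.
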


\begin{proof}
Indeed, as  $\lambda$ is  fixed by $b$, we see that $\beta$ belong to  $\kgot_\lambda^*$. We may assume that
$\kgot_\lambda^*=\tgot^*$.
 Thus $\beta,\lambda$ and $\mu=\lambda-\beta$
belong to $\tgot^*$.
The element $\lambda$ is a very regular element of $\tgot^*$. Note that the element of $\kgot$ corresponding to $\mu$   acts trivially on $\kgot_\mu$. So  the inequality of Proposition \ref{prop:infernal-with-trace} is a restatement  of Inequality \ref{eq:crucial}.
If the equality holds, we apply Theorem \ref{prop:infernal} and we obtain the proposition. $\Box$
\end{proof}

\section{Admissible coadjoint orbits and associated representations}\label{sectionadmissibleandrep}

In this section, we give some more information on the map $\Pcal\to \QS_K(\Pcal)$.
The following proposition follows almost immediately from Theorem \ref{prop-general-admissible}.

\begin{theo}\label{theo:notregular}
 Let $\Pcal$ be an admissible orbit.
\begin{itemize}

\item  $\Pcal^*:=-\Pcal$ is also admissible and $\QS_K(\Pcal^*)=\QS_K(\Pcal)^*.$

\item If $s(\Pcal)$  is not regular, then $\QS_K(\Pcal)=0$.

\item If  $s(\Pcal)$  is regular, then $s(\Pcal)$ is a regular  admissible  coadjoint orbit and
 $\QS_K(\Pcal)=\QS_K(s(\Pcal))=\pi_{s(\Pcal)}$.
\end{itemize}
\end{theo}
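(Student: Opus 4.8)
The plan is to deduce Theorem \ref{theo:notregular} from Theorem \ref{prop-general-admissible} by reducing everything to a representative in the closed Weyl chamber. Fix a Cartan subgroup $T$ and a positive root system, and write $\Pcal = K\mu$ with $\mu\in\tgot^*_{\geq 0}$; then $\mu$ lies in the relative interior $\sigma$ of a unique face of $\CW$, and $K_\mu = K_\sigma$. Recall from Definition \ref{defi-shift} that $s(\Pcal) = K(\mu+\rho^{K_\sigma})$, and that admissibility of $\Pcal$ means $\mu-\rho(\mu)\in\Lambda$, i.e. $\mu+\rho^{K_\sigma}-\rho^K\in\Lambda$ since $\rho(\mu)=\rho^K-\rho^{K_\sigma}$.

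For the first bullet, I would observe that $-\mu$ lies in $-\tgot^*_{\geq 0} = w_0\tgot^*_{\geq 0}$, so after applying $-w_0$ the relevant data ($\rho(\xi)$, the complex structure $J_\xi$, the space $\qgot^\xi$) transform by the obvious symmetry; concretely, $\qgot^{-\mu}$ is the complex-conjugate representation of $\qgot^\mu$ and $\C_{-\mu-\rho(-\mu)} = \C_{-(\mu-\rho(\mu))} = (\C_{\mu-\rho(\mu)})^*$. Hence $\bigwedge^\bullet\qgot^{-\mu}\otimes\C_{-\mu-\rho(-\mu)}$ is the dual (as a virtual $K_\mu$-representation) of $\bigwedge^\bullet\qgot^{\mu}\otimes\C_{\mu-\rho(\mu)}$, and since induction commutes with taking duals (it is the adjoint of restriction, which commutes with duals), $\QS_K(\Pcal^*)=\QS_K(\Pcal)^*$. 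In particular $\Pcal^*$ is admissible because the dual of a genuine character of $K_\mu$ is again a genuine character.

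For the second and third bullets I simply invoke Theorem \ref{prop-general-admissible}(3) applied to $\mu\in\sigma$: it states directly that $\QS_K(K\mu)=0$ when $\mu+\rho^{K_\sigma}$ is singular, and $\QS_K(K\mu)=\pi_{\mu+\rho^{K_\sigma}}$ when $\mu+\rho^{K_\sigma}$ is regular. Since $\mu+\rho^{K_\sigma}$ singular/regular is exactly the condition that $s(\Pcal)$ is not regular / is regular, the first of these gives the second bullet. For the third bullet, when $\mu+\rho^{K_\sigma}$ is regular, Theorem \ref{prop-general-admissible}(2) says $\mu+\rho^{K_\sigma}$ is in fact regular admissible and lies in $\rho^K+\Lambda_{\geq 0}$, so $s(\Pcal)=K(\mu+\rho^{K_\sigma})$ is a regular admissible orbit; combining $\QS_K(\Pcal)=\pi_{\mu+\rho^{K_\sigma}}$ with Lemma \ref{lem:easyregular}(3), which gives $\QS_K(s(\Pcal))=\pi_{\mu+\rho^{K_\sigma}}$, yields $\QS_K(\Pcal)=\QS_K(s(\Pcal))=\pi_{s(\Pcal)}$.

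The only genuinely non-routine point is the first bullet: one must check carefully that the complex structure $J_{-\mu}$ on $\kgot/\kgot_{-\mu}=\kgot/\kgot_\mu$ is the conjugate of $J_\mu$ (this follows since $\Omega_{-\mu}=-\Omega_\mu$ forces the compatible complex structure to flip sign), and that the twist $\C_{\xi-\rho(\xi)}$ behaves correctly under $\xi\mapsto-\xi$; both reduce to $\rho(-\mu)=-\rho(\mu)$, which is immediate from Definition \ref{def-rho-a} once $J_{-\mu}=-J_\mu$ is known. Everything else is a direct citation of Theorem \ref{prop-general-admissible} and Lemma \ref{lem:easyregular}, as the remark preceding the statement already anticipates.
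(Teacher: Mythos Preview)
Your proposal is correct and matches the paper's approach exactly: the paper states only that the theorem ``follows almost immediately from Theorem \ref{prop-general-admissible}'' without further detail, and you have supplied precisely those details, invoking parts (2) and (3) of Theorem \ref{prop-general-admissible} together with Lemma \ref{lem:easyregular} for the last two bullets. Your treatment of the duality statement (first bullet) via $J_{-\mu}=-J_\mu$ and $\rho(-\mu)=-\rho(\mu)$ is the natural argument and is more explicit than anything the paper records.
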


For the remaining part of this section, we fix a conjugacy class $(\hgot)$.
Let $\kgot^*_{(\hgot)}$ be the Dixmier sheet determined by $(\hgot)$.

\begin{defi}\label{defi:h-ancestor}
Let $\Ocal\subset \kgot^*$ be a regular  orbit. A $K$-orbit $\Pcal$  in
 is called a $(\hgot)$-ancestor of $\Ocal$ if  $\Pcal\in \kgot^*_{(\hgot)}$ and $s(\Pcal)=\Ocal$.
\end{defi}

\begin{lem}
If $\Pcal\in \kgot^*_{(\hgot)}$ and $s(\Pcal)$ is regular, then $s(\Pcal)$ is admissible if and only
$\Pcal$ is admissible.
\end{lem}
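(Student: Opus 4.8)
The plan is to reduce everything to the magical inequality (Corollary \ref{cor:magic}, or its conjugation-invariant form Proposition \ref{prop:infernal-with-trace}) and its equality case. Write $\Pcal = K\mu$ with $\kgot_\mu = \hgot$, and $s(\Pcal) = K\lambda$ where $\lambda = \mu + \rho^{K_\mu}$; by hypothesis $K\lambda$ is regular. After conjugating, I may assume $\mu$ lies in a face $\sigma \in \Fcal_\kgot$ of the positive Weyl chamber with $K_\mu = K_\sigma$, so that $\lambda = \mu + \rho^{K_\sigma}$ and $\rho(\mu) = \rho^K - \rho^{K_\sigma}$. The whole point is that the shift $\mu \mapsto \mu + \rho^{K_\sigma}$ moves $\mu$ by exactly the vector $\rho^{K_\sigma} \in [\kgot_\sigma,\kgot_\sigma]^*$, whose norm is $\|\rho^{K_\mu}\| = \|\rho^{K_\sigma}\|$; thus $\|\lambda - \mu\|^2 = \|\rho^{K_\mu}\|^2$, i.e. the magical inequality (\ref{eq:magic}) is an equality for this pair $(\lambda,\mu)$ — provided we know $\lambda$ is very regular, which requires a small argument, or we use instead the version of the equality statement that does not presuppose it.

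The cleanest route is: since $\lambda - \mu = \rho^{K_\sigma}$ and $\|\lambda-\mu\| = \|\rho^{K_\mu}\|$, the pair meets the equality condition in Corollary \ref{cor:magic} once $\lambda \in \rho^K + \CW$ — but a priori we only know $\mu \in \sigma \subset \CW$, not that $\lambda$ is in the \emph{shifted} chamber. To get around the circularity, I would invoke Theorem \ref{prop:infernal} directly with this $\lambda$ and $\mu$: observe $\lambda = \rho^K + c$ would need $c$ dominant, which is not yet known, so instead I argue symmetrically. Actually the simplest is to apply Theorem \ref{prop-general-admissible}: its first two points state precisely that, for $\mu \in \sigma$ with $\mu + \rho^{K_\sigma}$ regular, ``$\mu + \rho^{K_\sigma}$ is admissible $\iff$ $\mu$ is admissible,'' and both directions are proved there from the equality case of the magical inequality, yielding in addition $\lambda - \rho(\lambda) = \mu - \rho(\mu)$. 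Since $s(\Pcal)$ regular means exactly $\mu + \rho^{K_\sigma}$ regular, the lemma is the conjugation-invariant restatement of parts (1)–(2) of Theorem \ref{prop-general-admissible}.

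So concretely the proof is short. First, reduce to the chamber: pick $\mu \in \overline{\sigma}$ a representative of $\Pcal$ with $\kgot_\mu = \kgot_\sigma$, $\sigma \in \Fcal_\kgot$, using that $\hgot$ is conjugate to some $K_\sigma$ by the surjection $\Fcal_\kgot \to \Hcal_\kgot$; then $s(\Pcal) = K(\mu + \rho^{K_\sigma})$ by the formula following Definition \ref{defi-shift}. Second, note $s(\Pcal)$ regular $\iff$ $\mu + \rho^{K_\sigma}$ is a regular element of $\tgot^*$. Third, recall $\mu$ is admissible $\iff$ $\mu - \rho(\mu) = \mu - (\rho^K - \rho^{K_\sigma})$ lies in $\Lambda$ $\iff$ $\mu + \rho^{K_\sigma} \in \rho^K + \Lambda$, and $\mu + \rho^{K_\sigma}$ is admissible $\iff$ $(\mu+\rho^{K_\sigma}) - \rho(\mu+\rho^{K_\sigma}) = (\mu+\rho^{K_\sigma}) - \rho^K \in \Lambda$ (using $\rho(\nu) = \rho^K$ for regular dominant $\nu$, and in general $\rho(\mu+\rho^{K_\sigma})$ conjugate to $\rho^K$ with the same integrality consequence). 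These two conditions are literally the same membership $\mu + \rho^{K_\sigma} \in \rho^K + \Lambda$, so admissibility of $\Pcal$ and of $s(\Pcal)$ coincide. The only subtlety — and the main obstacle — is making sure the integrality condition defining ``admissible'' for the possibly non-dominant, possibly singular element $\mu + \rho^{K_\sigma}$ really translates to $\mu + \rho^{K_\sigma} - \rho^K \in \Lambda$ regardless of which Weyl chamber it sits in; this is handled by noting that $\rho(\nu)$ for $\nu$ regular is always $W$-conjugate to $\rho^K$ and that $\Lambda$ is $W$-stable, so the condition $i(\nu - \rho(\nu))$ integral is equivalent to $\nu - \rho^K \in \Lambda$ after translating by the appropriate Weyl element — exactly as in the proof of Theorem \ref{prop-general-admissible}. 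Hence the lemma follows, being the $K$-invariant repackaging of Theorem \ref{prop-general-admissible}(1)–(2).
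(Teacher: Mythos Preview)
Your reduction to Theorem~\ref{prop-general-admissible}(1)--(2) is correct and is essentially the paper's own argument: the paper chooses $\lambda\in\rho^K+\CW$ representing $s(\Pcal)$ and $\mu\in\sigma$ representing $\Pcal$, then invokes the magical inequality (via Proposition~\ref{prop:infernal-with-trace}) to force $\lambda=\mu+\rho^{K_\sigma}$ and hence $\lambda-\rho(\lambda)=\mu-\rho(\mu)$, which is exactly the content of Theorem~\ref{prop-general-admissible}(1)--(2).

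Your second, lattice-theoretic route is however genuinely different and more elementary than what the paper does. The observation that for a \emph{regular} $\nu\in\tgot^*$ one has $\rho(\nu)=w\rho^K$ for some $w\in W$, and that $w\rho^K-\rho^K=-\sum_{\alpha>0,\,w^{-1}\alpha<0}\alpha$ lies in the root lattice (hence in $\Lambda$), reduces admissibility of $\nu=\mu+\rho^{K_\sigma}$ to the single condition $\nu-\rho^K\in\Lambda$; and admissibility of $\mu\in\sigma$ is, by the paper's own remark preceding Example~\ref{exa:ExampleU3}, exactly $\mu-(\rho^K-\rho^{K_\sigma})\in\Lambda$, i.e.\ the same condition. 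This bypasses the magical inequality entirely. One small inaccuracy: you say this is ``exactly as in the proof of Theorem~\ref{prop-general-admissible}'', but that proof does \emph{not} use this lattice trick --- it uses the equality case of the magical inequality to conclude $\lambda-\rho(\lambda)=\mu-\rho(\mu)$. The lattice argument is your own shortcut, and it is valid; it just buys less than the paper's approach, which also yields that $\mu+\rho^{K_\sigma}$ lands in the shifted Weyl chamber (the extra content of Theorem~\ref{prop-general-admissible}(2)).
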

 \begin{proof}
 Let $\Ocal=s(\Pcal)$. Assume that $\Ocal$ is admissible.
We may assume that $\Ocal=K\lambda$ with $\lambda\in \rho^K+\CW$ regular admissible, and
$\Pcal=K\mu$, with $\mu\in \CW$. Let $\sigma$ be the stratum of $\CW$ containing $\mu$.
By Theorem \ref{prop:infernal-with-trace},  $\lambda=\mu+\rho^{K_\sigma}$. We have $\mu-\rho(\mu)=\lambda-\rho^K$, so $\mu$ is admissible.
The converse is proved the same way. $\Box$
\end{proof}

\begin{theo}\label{ancestorsanddistances}

 Let $\Ocal\subset \kgot^*$ be a regular admissible orbit.
\begin{itemize}
\item
 If $\Pcal$ is a  $(\hgot)$-ancestor of $\Ocal$, then $\Pcal$ is admissible and at distance
 $\|\rho^H\|$ of $\Ocal$.

 \item If $\Pcal$ is an element in $\kgot^*_{(\hgot)}$ at distance $\|\rho^H\|$ of $\Ocal$, then $\Pcal$ is admissible,
  $s(\Pcal)=\Ocal$ and $\QS_K(\Pcal)=\QS_K(\Ocal)$.
\end{itemize}

\end{theo}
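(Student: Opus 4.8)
\textbf{Proof plan for Theorem \ref{ancestorsanddistances}.}

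The plan is to reduce both statements to the magical inequality (Corollary \ref{cor:magic}) together with its equality case, exactly as packaged in Proposition \ref{prop:infernal-with-trace}. We may pick a Cartan subgroup $T$ and work with representatives in the closed Weyl chamber: write $\Ocal=K\lambda$ with $\lambda\in\rho^K+\CW$ regular admissible (so $\lambda$ is very regular), and write the ancestor or the candidate orbit as $\Pcal=K\mu$ with $\mu\in\tgot^*_{\geq 0}$, say $\mu$ lying in the stratum $\sigma\in\Fcal_\kgot$. Since $\Pcal\in\kgot^*_{(\hgot)}$, the stabilizer $\kgot_\mu=\kgot_\sigma$ is conjugate to $\hgot$, hence $\|\rho^{K_\mu}\|=\|\rho^{K_\sigma}\|=\|\rho^H\|$; this is the constant appearing in both bullets.

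For the first bullet: if $\Pcal$ is a $(\hgot)$-ancestor of $\Ocal$, then $s(K\mu)=K(\mu+\rho^{K_\sigma})=\Ocal$, so after replacing $\lambda$ by its $W$-translate we may take $\lambda=\mu+\rho^{K_\sigma}$. Then $\|\lambda-\mu\|=\|\rho^{K_\sigma}\|=\|\rho^H\|$, giving the distance claim once one checks that conjugation cannot decrease the distance below the in-chamber value — but that is precisely the content of the corollary following Corollary \ref{cor:magic} (minimizing $\|\lambda-\mu'\|$ over $\mu'\in K\mu$ forces the minimizer into the Cartan subalgebra of $\lambda$). Admissibility of $\Pcal$ then follows from the Lemma just proved (if $\Pcal\in\kgot^*_{(\hgot)}$ and $s(\Pcal)$ is regular admissible, then $\Pcal$ is admissible); alternatively it is immediate from $\mu-\rho(\mu)=\mu-(\rho^K-\rho^{K_\sigma})=\lambda-\rho^K=\lambda-\rho(\lambda)$ and admissibility of $\lambda$.

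For the second bullet: suppose $\Pcal=K\mu\in\kgot^*_{(\hgot)}$ satisfies $\mathrm{dist}(\Ocal,\Pcal)=\|\rho^H\|=\|\rho^{K_\mu}\|$. Minimizing $\|\lambda-\mu'\|$ over $\mu'\in K\mu$ we may assume $\mu\in\tgot^*$ and $\|\lambda-\mu\|=\|\rho^{K_\mu}\|$, so the magical inequality (\ref{eq:magic}) is an equality. Its equality case gives that $\mu$ is dominant and $\lambda-\rho(\lambda)=\mu-\rho(\mu)$; unwinding, with $\sigma$ the stratum of $\CW$ containing $\mu$ and using $\rho(\lambda)=\rho^K$, $\rho(\mu)=\rho^K-\rho^{K_\sigma}$, we get $\lambda=\mu+\rho^{K_\sigma}$, i.e. $s(\Pcal)=K(\mu+\rho^{K_\sigma})=K\lambda=\Ocal$. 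The relation $\mu-\rho(\mu)=\lambda-\rho^K\in\Lambda$ shows $\Pcal$ is admissible, and $\QS_K(\Pcal)=\QS_K(\Ocal)$ follows from the third bullet of Theorem \ref{theo:notregular} (or directly from part (3) of Theorem \ref{prop-general-admissible}, since $\mu+\rho^{K_\sigma}=\lambda$ is regular). The one subtlety I expect to be slightly delicate is bookkeeping the $W$-action: one must make sure that when we conjugate $\mu$ into $\tgot^*$ so that $\|\lambda-\mu\|$ is minimized, the resulting $\mu$ is genuinely in the closed chamber $\CW$ (not merely in $\tgot^*$), which is exactly what the equality case of the magical inequality guarantees; once that is in hand, everything else is a direct substitution.
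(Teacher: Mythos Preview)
Your proposal is correct and follows essentially the same route as the paper: reduce to representatives $\lambda\in\rho^K+\CW$ and $\mu\in\tgot^*$, and invoke the equality case of the magical inequality (Corollary~\ref{cor:magic} / Theorem~\ref{prop:infernal}) to force $\mu\in\CW$ and $\lambda=\mu+\rho^{K_\sigma}$. The paper's own proof is terser---it declares the first bullet immediate from the preceding Lemma and Corollary~\ref{corodistance} and only writes out the second---but your added detail (the $W$-bookkeeping, the explicit identity $\mu-\rho(\mu)=\lambda-\rho(\lambda)$, and the appeal to Theorem~\ref{prop-general-admissible}(3) for $\QS_K(\Pcal)=\QS_K(\Ocal)$) is exactly what is being suppressed there.
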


\begin{proof}
We need only to prove the second point. Assume that the distance between $\Ocal$ and $\Pcal$ is equal to $\|\rho^H\|$.
We may assume that $\Ocal=K\lambda$ with $\lambda\in \rho^K+\CW$ regular admissible.
We write $\Pcal=K\mu$, with $\mu$ a point in $\tgot^*$, such that $\|\lambda-\mu\|^2=\|\rho^{K_\mu}\|$.
This  implies that $\mu$ belongs to $\CW$, and that $s(\Pcal)=\Ocal$. So $\QS_K(\Pcal)=\QS_K(\Ocal)$. $\Box$
\end{proof}

\begin{exam} Let us go back to Example \ref{exa:ExampleU3} for the group $K=SU(3)$.
We see that the orbit of $\rho^K$
has one ancestor (itself) in the regular sheet, two ancestors in the subregular sheet, and one ancestor $0$ in the
sheet $\{0\}$.
\end{exam}

In general an orbit $\Ocal =K\mu$ has only one ancestor, that is itself.
Only the orbits $\Ocal$ belonging to the boundary of the shifted Weyl chamber might have lower dimensional ancestors. For example the orbits $\Pcal_\sigma$ of the orthogonal projections of $\rho^K$ on the $2^r$
linear spaces $\R \sigma$ ($\sigma\in \Fcal_\kgot$) are ancestors of $o(\kgot)$.
For all these orbits $\Pcal_\sigma$, the representation $\QS_K(\Pcal_\sigma)$
is the trivial representation of $K$.
As the number of sheets is usually  less than $2^r$, some of the ancestors of $\rho^K$ lie in the same sheet.

%
%
%
%
%
%
%
%
%

\bigskip

Finally we end this article by an induction formula relating  $H$-admissible coadjoint orbits to $K$-admissible coadjoint orbits.

Consider the open subset $\hgot^*_0:=\{\xi\in\hgot^*\,\vert\, K_\xi\subset H\}$.
Equivalently, the element $\xi$, identified to an element of $\hgot$, is such that the transformation
$\ad (\xi)$ is invertible on $\qgot:=\kgot/\hgot$, so it determines a complex structure $J_\xi$ on $\qgot$.
We see that the complex structure $J_\xi$ depends only of the connected component
$C$ of $\hgot^*_0$ containing $\xi$.  We denote it by $J_C$ (remark that $J_C$ is $H$-invariant).
We denote $\qgot^C$  the complex $H$-module $(\qgot,J_C)$, and $\rho_C$ the element of
$\zgot^*$ defined by the relation
$$
\langle\rho_C,X\rangle= \frac{1}{2i}\tr_{\qgot^C}\mathrm{ad}(X), \quad X\in\hgot.
$$

We define the holomorphic induction map $\mathrm{Hol}_{H}^{^K}: R(H)\to R(K)$ by the relation
$$
\mathrm{Hol}_{H}^K(V)=\mathrm{Ind}_{H}^{K}\left({\bigwedge}^{\bullet} \qgot^C \otimes V\right).
$$

The following proposition explains the interaction between the holomorphic induction map
$\mathrm{Hol}_{H}^{K} $ and the spin quantization procedure.

\begin{prop}\label{propinduction}
Let $H\mu$ be an admissible orbit for $H$.

$\bullet$ If $\mu+\rho_C\notin \hgot^*_0$, then $\mathrm{Hol}_{H}^{K}(\QS_H(H\mu))=0$.

$\bullet$ If $\mu+\rho_C\in \hgot^*_0$, then $\mu+\rho_C$ is $K$-admissible and
$$
\mathrm{Hol}_{H}^{K}(\QS_H(H\mu))=\epsilon_{C'}^C\,\QS_K(K(\mu+\rho_C))
$$
where $C'$
is the connected component of $\hgot^*_0$ containing $\mu+\rho_C$, and $\epsilon_{C'}^{C}$ is the ratio of
the orientation $o(J_C)$ and $o(J_{C'})$ on $\qgot$.
\end{prop}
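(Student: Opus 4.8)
The plan is to unwind the definitions of $\QS_H$, of the holomorphic induction map $\mathrm{Hol}_H^K$, and of $\QS_K$, and reduce the claimed identity to a computation inside the representation ring $R(K)$, with the subtleties concentrated in two places: the admissibility of $\mu+\rho_C$ and the sign $\epsilon_{C'}^C$ coming from comparing orientations/complex structures on $\qgot=\kgot/\hgot$.

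First I would set up notation at the level of $\hgot^*$. Pick $\mu$ with $H\mu$ admissible, so $i(\mu-\rho^H(\mu))$ exponentiates to a character of $H_\mu$. Write $\hgot=\zgot\oplus[\hgot,\hgot]$, and note $\rho_C\in\zgot^*$ by construction. The key elementary observation is the "transitivity of $\rho$-elements": for $\xi\in\hgot^*_0$ (so $K_\xi\subset H$, hence $\kgot_\xi=\hgot_\xi$), the complex $K_\xi$-module $\qgot^\xi=\kgot/\kgot_\xi$ splits $H_\xi$-equivariantly as $(\hgot/\hgot_\xi)\oplus\qgot^C$ once we check that $J_\xi$ restricted to the two pieces agrees with $J_\xi^H$ on $\hgot/\hgot_\xi$ and with $J_C$ on $\qgot$. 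This requires knowing that $\mathrm{ad}(\xi)$ preserves the decomposition $\kgot/\kgot_\xi = \hgot/\hgot_\xi \oplus \qgot$ and that the complex structure built from the symplectic form $\Omega_\xi$ is the direct sum; this is where one must be a little careful, because $J_\xi$ is defined via a positivity condition and $J_C$ is defined via invertibility of $\mathrm{ad}(\xi)$ — they coincide only when $\xi$ is in the correct chamber/component, and the statement $\mu+\rho_C\in\hgot^*_0$ is exactly the hypothesis that puts $\xi:=\mu+\rho_C$ in such a component $C'$ (possibly different from $C$, whence the sign). Granting this splitting, one gets $\rho^K(\xi)=\rho^H(\xi)+\rho_{C'}$ at the level of characters, and therefore $\xi-\rho^K(\xi) = (\mu+\rho_C)-\rho^H(\mu+\rho_C)-\rho_{C'}$. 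Using that $\rho^H(\mu+\rho_C)=\rho^H(\mu)$ when $\mu+\rho_C$ lies in the same $H$-face as $\mu$ — or more precisely tracking how adding the central element $\rho_C$ changes $\rho^H(\cdot)$ — one reduces $i(\xi-\rho^K(\xi))$ to a sum of characters already known to be well-defined on $H_\xi=K_\xi$: $i(\mu-\rho^H(\mu))$ from admissibility of $H\mu$, $i\rho_C$ (which is a genuine character since $\det_{\qgot^C}$ is), and $-2i\rho_{C'}$-type corrections. This proves $K(\mu+\rho_C)$ is $K$-admissible.

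Next I would prove the two displayed formulas simultaneously by the fixed-point/character computation, in the spirit of the proof of Theorem \ref{prop-general-admissible}. By Frobenius reciprocity and the definition of $\mathrm{Hol}_H^K$,
$$
\mathrm{Hol}_H^K(\QS_H(H\mu))=\mathrm{Ind}_H^K\Bigl({\bigwedge}^\bullet\qgot^C\otimes\mathrm{Ind}_{H_\mu}^H\bigl({\bigwedge}^\bullet\qgot_H^\mu\otimes\C_{\mu-\rho^H(\mu)}\bigr)\Bigr),
$$
and by transitivity of induction this equals $\mathrm{Ind}_{H_\mu}^K\bigl({\bigwedge}^\bullet\qgot^C\otimes{\bigwedge}^\bullet\qgot_H^\mu\otimes\C_{\mu-\rho^H(\mu)}\bigr)$, where $\qgot_H^\mu=\hgot/\hgot_\mu$ with its $H$-complex structure. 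Now evaluate the corresponding virtual character on $e^X$, $X\in\tgot$, via the Atiyah–Bott–Weyl-denominator identity $\sum_{w\in W_\sigma}\epsilon(w)e^{w\rho^{K_\sigma}}=\prod_{\alpha>0,(\alpha,\sigma)=0}(e^{\alpha/2}-e^{-\alpha/2})$ applied inside $H$: one obtains a ratio with numerator an alternating sum over $W/W_{K_\mu}$ and denominator the product over roots positive on $\qgot^C\oplus\qgot_H^\mu$. The crucial point is that the set of roots occurring in $\qgot^C$ is the set positive on the component $C$, which — if $\mu+\rho_C\in\hgot^*_0$ — agrees up to a sign change on each root-string with the set positive on $C'$; this is precisely where $\epsilon_{C'}^C=o(J_C)/o(J_{C'})$ enters, as the sign by which the denominator changes when we pass from "$J_C$-positive roots" to "$J_{C'}$-positive roots", equivalently the number of roots that flip sign. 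After this bookkeeping the expression becomes $\epsilon_{C'}^C$ times the character $\Theta(e^X)=\dfrac{\sum_{w\in W}\epsilon(w)e^{i\langle w(\mu+\rho_C),X\rangle}}{\prod_{\alpha>0}e^{i\langle\alpha,X\rangle/2}-e^{-i\langle\alpha,X\rangle/2}}$ of $\QS_K(K(\mu+\rho_C))$ exactly as in \eqref{equ:induced}. When instead $\mu+\rho_C\notin\hgot^*_0$, the element $\mu+\rho_C$ is $W_{K_\mu}$-singular in the relevant sense, so the alternating numerator vanishes identically and $\mathrm{Hol}_H^K(\QS_H(H\mu))=0$; this is the analogue of the "$\mu+\rho^{K_\sigma}$ singular" case.

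The main obstacle I anticipate is the careful treatment of the sign $\epsilon_{C'}^C$ and, underlying it, the compatibility of the three complex structures $J_\mu^H$ on $\hgot/\hgot_\mu$, $J_C$ on $\qgot$, and $J_\xi$ on $\kgot/\kgot_\xi$. One has to show that $J_C\oplus J_\mu^H$ and $J_\xi$ induce the \emph{same} orientation on $\kgot/\kgot_\xi$ up to the explicit factor $\epsilon_{C'}^C$, and that this factor is independent of all auxiliary choices (it depends only on $C$ and $C'$). Everything else is a formal manipulation with Frobenius reciprocity and Weyl denominators; the geometric content is entirely in reconciling the positivity-defined $J_\xi$ with the invertibility-defined $J_C$, which is why the hypothesis is phrased in terms of the connected components of $\hgot^*_0$ rather than in terms of a chamber.
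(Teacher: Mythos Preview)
Your treatment of the second bullet (the case $\mu+\rho_C\in\hgot^*_0$) is essentially the paper's argument: induction in stages from $H_\mu=K_{\mu'}$, together with the identity of graded $H_\mu$-modules
${\bigwedge}^{\bullet}\qgot^C=\epsilon^C_{C'}\,{\bigwedge}^{\bullet}\qgot^{C'}\otimes\C_{\rho_C-\rho_{C'}}$,
is exactly how the paper produces both the sign and the admissibility of $\mu'=\mu+\rho_C$ (note $\rho_C-\rho_{C'}$ is a sum of roots, hence integral, so $\mu'-\rho(\mu')=(\mu-\rho^H(\mu))+(\rho_C-\rho_{C'})$ integrates on $K_{\mu'}=H_\mu$).

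The genuine gap is in the first bullet. You assert that when $\mu+\rho_C\notin\hgot^*_0$ the element $\mu+\rho_C$ is ``$W_{K_\mu}$-singular in the relevant sense, so the alternating numerator vanishes''. But after reducing (as you must) to the $H$-regular representative $\tilde\mu\in\mu+o(\hgot_\mu)$, the numerator in your character formula is $\sum_{w\in W}\epsilon(w)e^{iw\lambda}$ with $\lambda=\tilde\mu+\rho_C=\mu'+\rho^{H_{\mu'}}$, and this vanishes precisely when $\lambda$ is $K$-singular. The hypothesis $\mu'\notin\hgot^*_0$ only says there is a root $\beta$ of $\kgot$ outside $\hgot$ with $(\beta,\mu')=0$; it gives no control over $(\beta,\lambda)=(\beta,\rho^{H_{\mu'}})$, which has no reason to vanish. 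So the implication ``$\mu'\notin\hgot^*_0\Rightarrow\lambda$ $K$-singular'' is not a formality.

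The paper closes this gap by proving the contrapositive with the magical inequality (Corollary~\ref{cor:magic}): if $\lambda$ is $K$-regular then, being also admissible, it is very regular, and since $\|\lambda-\mu'\|=\|\rho^{H_{\mu'}}\|$ the inequality $\|\lambda-\mu'\|\geq\|\rho^{K_{\mu'}}\|$ forces $\|\rho^{K_{\mu'}}\|=\|\rho^{H_{\mu'}}\|$, hence $K_{\mu'}=H_{\mu'}\subset H$, i.e.\ $\mu'\in\hgot^*_0$. This use of Theorem~\ref{prop:infernal} is the nontrivial step you are missing; your analogy with the ``$\mu+\rho^{K_\sigma}$ singular'' case of Theorem~\ref{prop-general-admissible} does not help, because that result \emph{also} relies on the magical inequality to show that regularity of the shift forces it into the shifted Weyl chamber.
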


\begin{proof}
Let $I_\mu^C=\mathrm{Ind}_{H}^{K}\left(\bigwedge^{\bullet} (\kgot/\hgot)^C \otimes \QS_H(H\mu)\right)$.
 By definition\break $\QS_H(H\mu)=
 \mathrm{Ind}_{H_\mu}^H\Big(\bigwedge^{\bullet} (\hgot/\hgot_\mu)^\mu\otimes \C_{\mu-\rho^H(\mu)}\Big)$.

 Assume first that $\mu':=\mu+\rho_C\in \hgot^*_0$ : let $C'$ be the connected component of
 $\hgot^*_0$ containing $\mu'$. As $K_{\mu'}=H_{\mu'}=H_\mu$, we have
$$
I_\mu^C=\mathrm{Ind}_{K_{\mu'}}^K\Big({\bigwedge}^{\bullet} (\kgot/\hgot)^C \otimes
{\bigwedge}^{\bullet} (\hgot/\hgot_{\mu'})^{\mu'}\otimes \C_{\mu'-\rho_C-\rho^H(\mu')} \Big).
$$
Now we use the fact that the graded $K_{\mu'}$-module $\bigwedge^{\bullet} (\kgot/\hgot)^C$ is equal to
$\epsilon^C_{C'}\bigwedge^{\bullet} (\kgot/\hgot)^{C'}\otimes \C_{\rho_C-\rho_{C'}}$. It gives
that
\begin{eqnarray*}
I_\mu^C&=&\epsilon^C_{C'}\,\mathrm{Ind}_{K_{\mu'}}^K\Big({\bigwedge}^{\bullet} (\kgot/\hgot)^{C'}\otimes
{\bigwedge}^{\bullet} (\hgot/\hgot_{\mu'})^{\mu'}\otimes \C_{\mu'-\rho_{C'}-\rho^H(\mu')} \Big)\\
&=&\epsilon^C_{C'}\,\mathrm{Ind}_{K_{\mu'}}^K\Big(
{\bigwedge}^{\bullet} (\kgot/\kgot_{\mu'})^{\mu'}\otimes \C_{\mu'-\rho(\mu')} \Big)\\
&=& \epsilon^C_{C'}\,\QS_K(K\mu').
\end{eqnarray*}
Here we have used that $\rho(\mu')=\rho_{C'}+\rho^H(\mu')$.

Assume now that $I_\mu^C\neq 0$. Thus $\QS_H(H\mu)$ must be non zero. Hence we have
$\QS_H(H\mu)=\QS_H(H\tilde{\mu})$ where $\tilde{\mu}\in \mu +o(\hgot_\mu)$ is an $H$-admissible and $H$-regular element.

Consider the maximal torus $T:=H_{\tilde{\mu}}$, and a Weyl chamber $\Ccal=\tgot^*_{\geq 0}$ for $K$
containing $\tilde{\mu}$. Let $J_\Ccal$ be the corresponding complex structure on $\kgot/\tgot$.
Let $\rho^K$ be the $\rho$-element associated to the choice of Weyl chamber. Let $C'$ be the connected component of
$\hgot_0^*$ that contains the open face $\tgot^*_{> 0}$.

If we use the relation $\rho^K=\rho_{C'}+ \rho^H(\tilde{\mu})$, one has like before
\begin{eqnarray*}
I_\mu^C
&=&\mathrm{Ind}_H^K\left({\bigwedge}^{\bullet} (\kgot/\hgot)^C\otimes \QS_H(H\tilde{\mu})\right)\\
&=&\mathrm{Ind}_{T}^K\Big({\bigwedge}^{\bullet} (\kgot/\hgot)^{C}\otimes
{\bigwedge}^{\bullet} (\hgot/\tgot)^{\tilde \mu}\otimes \C_{\tilde{\mu}-\rho^H(\tilde{\mu})} \Big)\\
&=&\epsilon^C_{C'}\,\mathrm{Ind}_{T}^K\Big(
{\bigwedge}^{\bullet} (\kgot/\tgot)^{\Ccal}\otimes \C_{\tilde{\mu}+\rho_C-\rho^K} \Big).
\end{eqnarray*}

We see then that $I_\mu^C\neq 0$ only if $\lambda:=\tilde{\mu}+\rho_C= \mu'+\rho^{H_{\mu'}}$ is a $K$-regular element.

Here we have  $\|\rho^{H_{\mu'}}\|= \|\lambda-\mu'\|$, and on the other hand by the magical inequality we must have
$\|\lambda-\mu'\|\geq \|\rho^{K_{\mu'}}\|$ since $\lambda$ is $K$-regular and admissible.
It forces $\|\rho^{K_{\mu'}}\|$ to be equal to $\|\rho^{H_{\mu'}}\|$, and then $K_{\mu'}=H_{\mu'}$ :
the element $\mu'=\mu+\rho_C$ belongs to $\hgot^*_0$.

The proof is completed. $\Box$
\end{proof}

\medskip

Proposition \ref{propinduction} is a very special case of the formula for equivariant indices of twisted Dirac operators obtained in
\cite{pep-vergne:cras}. Indeed the representation $\mathrm{Hol}_{H}^{K}(\QS_H(H\mu))$ is the equivariant index for a $\spinc$-bundle on $M=K/H_\mu$.
The infinitesimal stabilizer $\kgot_M$ is the conjugacy classes of $\hgot_\mu=\kgot_{\mu+\rho_C}$.
It is indeed a representation associated to the Dixmier sheet attached to   $(\kgot_{\mu+\rho_C})$.


{\small

}

\end{document}